\newtheorem{thm}{Theorem}[section]
\newtheorem{cor}[thm]{Corollary}
\newtheorem{prop}[thm]{Proposition}
\theoremstyle{definition}
\newtheorem{defn}[thm]{Definition}
\newtheorem{rem}[thm]{Remark}
\newtheorem{exa}[thm]{Example}
\begin{document}

\title{Alpha-Invariant of toric line bundles}

\author[T. Delcroix]{Thibaut Delcroix}
\address{Univ. Grenoble Alpes, IF, F-38000 Grenoble,France \\
CNRS, IF, F-38000 Grenoble,France}
\email{thibaut.delcroix@ujf-grenoble.fr}

\date{}

\begin{abstract}
We generalize the work of Jian Song to compute the $\alpha$-invariant of any (nef and big) toric 
line bundle in terms of the associated polytope. We use the analytic version of the computation of the log 
canonical threshold of monomial ideals to give the log canonical threshold of any non-negatively curved 
singular hermitian metric on the line bundle, and deduce the $\alpha$-invariant from this.
\end{abstract}

\maketitle

\section*{Introduction}

The $\alpha$-invariant of a line bundle $L$ on a complex manifold $X$ 
 is an invariant measuring the singularities of the non-negatively curved singular hermitian metrics 
on $L$. It was introduced by Tian in the case of the anticanonical bundle on a Fano manifold. 
Tian showed in \cite{Tia87}
that if the $\alpha$-invariant of the anticanonical bundle is strictly greater 
than $\frac{n}{n+1}$, then the Fano manifold admits a Kähler-Einstein metric.

The Yau-Tian-Donaldson conjecture asserts in general that $X$ admits an extremal metric in $c_1(L)$
if and only if the line bundle $L$ is K-stable. 
It was proved in \cite{CDS15a,CDS15b,CDS15c,Tia12}
that it holds when $L$ is the anticanonical bundle.
In particular (as it was shown also in \cite{OS12}), 
if the $\alpha$-invariant of the anticanonical bundle is greater than 
$\frac{n}{n+1}$, then the anticanonical bundle is K-stable.
Dervan \cite{Der13}
gave a similar condition of K-stability for a general line bundle, involving again its $\alpha$-invariant.
This is one motivation to compute explicitly the $\alpha$-invariants of line bundles when possible.

In \cite{CSD08}, Chel'tsov and Shramov 
computed for example the $\alpha$-invariant of the anticanonical bundle for many Fano manifolds of
dimension three.
In higher dimensions, Song \cite{Son05}
proved a formula giving the $\alpha$-invariant of the anticanonical bundle on a toric Fano manifold
in terms of its polytope. The only toric manifolds satisfying Tian's criterion are the symmetric 
toric manifolds. Batyrev and Selivanova \cite{BS99} proved first that their $\alpha$-invariant was one,
so that they admit a Kähler-Einstein metric.
Wang and Zhu \cite{WZ04}
fully settled the question of the existence of Kähler-Einstein metrics on toric Fano manifolds, 
and an illustration that Tian's criterion is only a sufficient condition can be found in the toric world \cite{NP11}.

The $\alpha$-invariant of a line bundle $L$ is strongly related to the log canonical thresholds (lct)
of metrics on $L$. The log canonical threshold was initially an algebraic invariant defined for ideal sheaves,
but it was shown to coincide with the complex singularity exponent and Demailly defines the log canonical 
threshold of any non-negatively curved singular hermitian metric on a line bundle in \cite{CSD08} for example.

One of the main examples of computation of log canonical threshold is in the case of monomial ideals.
Howald carried out the computation of the lct of such an ideal in terms of its Newton polygon \cite{How01}.
One can find in Guenancia \cite{Gue12} an analytic proof of this result, generalized to compute the lct 
of an ideal generated by a "toric" psh function on a neighborhood of $0\in \mathbb{C}^n$, i.e. a function 
invariant under rotation in each coordinate.

Since the only smooth affine toric manifolds without torus factor are isomorphic to $\mathbb{C}^n$, 
the computation of Guenancia in fact gives the log canonical threshold of any invariant metric on an 
affine smooth toric manifold, as we explain in Section~\ref{seclct}.

In this note, we give a formula for the $\alpha$-invariant of any line bundle $L$ on a compact smooth 
toric manifold in terms of its polytope. We also compute the log canonical threshold of any 
invariant non-negatively curved singular metric on $L$.

After this article was accepted, the author was informed that other authors computed similar invariants using other methods (H. Li, Y. Shi, Y. Yao \cite{LSY15}, and F. Ambro \cite{Amb14}).

\section{Line bundles on smooth toric manifolds}

\subsection{Toric manifolds}

Let us recall some basic facts about toric varieties (see \cite{Ful93}, \cite{Oda88}, \cite{CLS11}).

Let $T=(\mathbb{C}^*)^n$ be an algebraic torus.
Denote its group of characters by $M$, which is isomorphic to $\mathbb{Z}^n$ through the choice of a basis, 
and let $M_{\mathbb{R}}:=M\otimes \mathbb{R}\simeq \mathbb{R}^n$.
The dual $N$ of $M$ consists of the one parameter subgroups of $T$, and we let 
also $N_{\mathbb{R}}:=N\otimes \mathbb{R}\simeq \mathbb{R}^n$.

We denote by $T_c\simeq (S^1)^n$ the compact torus in $T$.

Considering only cones for the toric setting, we will call $\sigma \subset N_{\mathbb{R}}$ a \emph{cone}
if $\sigma$ is a convex cone generated by a finite set of elements of $N$.
The \emph{dual cone} $\sigma^{\vee}$ is defined as 
$$\sigma^{\vee}=\{x\in M_{\mathbb{R}}| \langle x,y \rangle \geq 0 ~ \forall y\in \sigma\}.$$

A \emph{fan} $\Sigma$ consists of a finite collection of cones $\sigma \subset N_{\mathbb{R}}$ such that 
every cone is \emph{strongly convex} (\emph{i.e.} $\{0\}$ is a face of $\sigma$), the faces of cones in $\Sigma$ 
are in $\Sigma$ and the intersection of two cones in $\Sigma$ is a union of faces of both.
The support of $\Sigma$ is $|\Sigma|:=\bigcup_{\sigma\in \Sigma} \sigma\subset N_{\mathbb{R}}$.

Recall that a fan $\Sigma$ in $N_{\mathbb{R}}$ determines a toric variety $X_{\Sigma}$, that is, a normal 
$T$-variety with an open and dense orbit isomorphic to $T$, and every toric variety
is obtained this way.

By the orbit-cone correspondence \cite[Theorem 3.2.6]{CLS11}, a maximal cone $\sigma$ of $\Sigma$ corresponds 
to a fixed point $z_{\sigma}$ in $X_{\Sigma}$. Also, a one-dimensional cone $\rho$ 
in $\Sigma$ corresponds
to a prime invariant divisor $D_{\rho}$ of $X_{\Sigma}$, and these divisors generate the group 
of Weil divisors of $X_{\Sigma}$. 
Let $\rho$ be such a cone, then we denote by $u_{\rho}$ the primitive vector in $N$ 
generating this ray.
We will denote by $\Sigma(r)$ the set of $r$-dimensional cones in $\Sigma$.

Many properties of $X_{\Sigma}$ can be read off from the fan. For example, $X_{\Sigma}$ is smooth if and 
only if every cone in the fan $\Sigma$ is generated by part of a basis of $N$. 
We will call a cone \emph{smooth} if it satisfies this condition.
The variety $X_{\Sigma}$ is complete if and only if $|\Sigma|=N_{\mathbb{R}}$.

We will assume in general in the following that either $|\Sigma|=N_{\mathbb{R}}$ or that 
$\Sigma$ is given by a strongly convex, full dimensional cone $\sigma$ and its faces, in which case we will 
denote $X_{\sigma}$ the corresponding (affine) toric variety.

\subsection{Line bundles}

Recall that a line bundle $L$ on a $G$-variety $X$ is called \emph{linearized} if there is an action of $G$ on 
$L$ such that for any $g\in G$ and $x\in X$, $g$ sends the fiber $L_x$ to the fiber 
$L_{g\cdot x}$ and the map defined this way between $L_x$ and $L_{g\cdot x}$ is linear.

To a $T$-linearized line bundle $L$ on $X_{\Sigma}$ is associated a set of characters $v_{\sigma}$, 
for $\sigma \in \Sigma(n)$. We define $v_{\sigma}$ as the opposite of the 
character of the action of $T$ on the fiber over the fixed point $z_{\sigma}$.

This defines the support function $g_L$ of $L$, which is a function on the support $|\Sigma|$
of $\Sigma$, linear on each cone, which takes integral values at points of $N$,
by 
$x\mapsto \langle v_{\sigma},x \rangle$ for $x\in \sigma$.

Another equivalent 
 data is the Weil divisor $D_L$ associated to $L$, which is related to $g_L$ by the following:
$D_L=-\sum_{\rho}g_L(u_{\rho})D_{\rho}$.

If $L$ is effective, then to $L$ is associated a polytope $P_L$ in $M_{\mathbb{R}}$.
This polytope can be defined as 
$$P_L=\left\{m\in M_{\mathbb{R}}| g_L(x)\leq  \langle m,x \rangle ~ \forall x\in |\Sigma|\right\}.$$

The properties of the line bundle can be read off from the polytope or the support function. 
In particular, we can associate to each point of $P_L\cap M$ a global section of $L$, and 
the collection of these sections form a basis of the space of algebraic sections of $L$.
Recall also the following,
where we assume that $|\Sigma|=N_{\mathbb{R}}$.

\begin{prop} 
\cite[Theorem 6.1.7]{CLS11}
The following are equivalent:
\begin{itemize}
\item $L$ is nef 
\item $L$ is generated by global sections
\item $\{v_{\sigma}\}$ is the set of vertices of $P_L$
\item $g_L$ is concave.
\end{itemize}
\end{prop}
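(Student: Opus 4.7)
The plan is to close the cycle: globally generated $\Rightarrow$ nef $\Rightarrow$ $g_L$ concave $\Rightarrow$ $\{v_\sigma\}$ is the set of vertices of $P_L$ $\Rightarrow$ globally generated. The first implication is a general fact, not specific to the toric setting: a choice of generating global sections defines a morphism $\varphi\colon X_\Sigma \to \mathbb{P}^N$ with $L \cong \varphi^*\mathcal{O}(1)$, so $L \cdot C = \deg(\varphi_*C) \geq 0$ for every irreducible curve $C$.

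For nef $\Rightarrow g_L$ concave, I would test nefness against $T$-invariant curves. Each wall $\tau \in \Sigma(n-1)$ separates two maximal cones $\sigma, \sigma'$ and corresponds to a $T$-invariant curve $C_\tau \cong \mathbb{P}^1$. Using smoothness of $X_\Sigma$, $\sigma \cup \sigma'$ is generated by $n+1$ rays $\rho, \rho', \rho_1, \dots, \rho_{n-1}$ satisfying a single primitive relation in $N$, from which one extracts an explicit formula for $L\cdot C_\tau$ in terms of the values $g_L(u_\rho), g_L(u_{\rho'}), g_L(u_{\rho_i})$. Non-negativity of $L \cdot C_\tau$ for every wall is precisely the condition that $g_L$ does not bend downward across $\tau$, that is, concavity of the piecewise linear function $g_L$ on $N_\mathbb{R}$.

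For $g_L$ concave $\Rightarrow \{v_\sigma\}$ is the set of vertices of $P_L$, concavity gives $g_L(x) \leq \langle v_\sigma, x \rangle$ for every $x \in N_\mathbb{R}$, hence $v_\sigma \in P_L$ for each maximal cone $\sigma$, and $g_L = \min_\sigma \langle v_\sigma, \cdot \rangle$ since this minimum is concave and agrees with $g_L$ on each maximal cone. Because $|\Sigma| = N_\mathbb{R}$, $P_L$ is bounded and coincides with $\mathrm{conv}\{v_\sigma\}$; each $v_\sigma$ is a vertex, since it uniquely optimizes $\langle \cdot, x \rangle$ over $P_L$ for $x$ in the interior of $\sigma$, and conversely every vertex is exposed by some such $x$. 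Finally, for $\{v_\sigma\}$ is the set of vertices of $P_L$ $\Rightarrow$ globally generated, the section $s_{v_\sigma}$ associated to $v_\sigma \in P_L \cap M$ restricts to a nowhere-vanishing section on the affine open $U_\sigma$, because $v_\sigma$ is precisely the $T$-weight of a local frame of $L$ at the fixed point $z_\sigma$; hence $\{s_{v_\sigma}\}_{\sigma \in \Sigma(n)}$ generates $L$ on $\bigcup_\sigma U_\sigma = X_\Sigma$.

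The main obstacle is the explicit intersection-theoretic computation of $L \cdot C_\tau$ on a smooth toric variety and the verification that its non-negativity matches concavity of $g_L$ across each wall. Once that dictionary is established, the remaining steps reduce to standard convex analysis and the well-known correspondence between $T$-linearized sections and lattice points of $P_L$.
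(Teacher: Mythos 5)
The paper does not actually prove this proposition: it is quoted directly from \cite[Theorem 6.1.7]{CLS11}, so there is no internal argument to compare yours against, and your proposal should be judged as a self-contained proof of a cited textbook fact. On its own terms, your cyclic scheme (globally generated $\Rightarrow$ nef $\Rightarrow$ $g_L$ concave $\Rightarrow$ $\{v_\sigma\}$ are the vertices of $P_L$ $\Rightarrow$ globally generated) is correct and is essentially the standard toric argument. Two remarks. First, the ingredients you defer are exactly where the real content sits: the wall relation $u_0+u_n=\sum_i b_i u_i$ for two adjacent smooth maximal cones and the resulting formula for $L\cdot C_\tau$, and then the lemma that a piecewise linear function on a complete fan which is concave across every wall is globally concave. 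This local-to-global statement is a genuine (if standard) lemma, proved for instance by restricting to generic line segments, and it should be stated explicitly rather than absorbed into the phrase ``does not bend downward across $\tau$''; as written, wall-wise non-bending and concavity of $g_L$ are not literally the same assertion. Second, your cycle is logically economical in a way worth noting: the implication nef $\Rightarrow$ wall-positivity needs only the definition of nefness applied to the invariant curves (no toric cone theorem or Kleiman-type criterion is required in that direction), and your last step uses only the weaker consequence that each $v_\sigma$ lies in $P_L\cap M$, so that the section $s_{v_\sigma}$ exists and is nowhere vanishing on $U_\sigma$ because $\langle v_\sigma,u_\rho\rangle=g_L(u_\rho)$ for every ray $\rho$ of $\sigma$; since the affine opens $U_\sigma$, $\sigma\in\Sigma(n)$, cover $X_\Sigma$, the cycle closes. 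Modulo writing out the two deferred standard computations, the proof is complete.
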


\begin{prop}
\cite[Lemma 9.3.9]{CLS11}
$L$ is big iff $P_L$ has nonempty interior.
\end{prop}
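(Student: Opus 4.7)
The plan is to reduce bigness to a lattice-point counting problem on the polytope, using the characterization of global sections of a toric line bundle recalled just above the statement.

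Recall that $L$ is big by definition iff there exist constants $c>0$ and $k_0$ with $h^0(X_\Sigma, L^{\otimes k}) \geq c\, k^n$ for all $k \geq k_0$, where $n = \dim X_\Sigma$. First I would verify that $P_{L^{\otimes k}} = k P_L$: this is immediate from the definition of the polytope, since the support function of $L^{\otimes k}$ is $k\, g_L$, and the defining inequalities $g_L(x) \leq \langle m, x\rangle$ scale accordingly. Combining this with the fact quoted in the excerpt that global sections of any effective toric line bundle are in bijection with the lattice points of its polytope, we get
\[
h^0(X_\Sigma, L^{\otimes k}) = \#\bigl(kP_L \cap M\bigr).
\]

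The next step is the standard asymptotic count. If $P_L$ is a (possibly lower-dimensional) rational polytope of dimension $d$, then $\#(kP_L \cap M)$ grows like $k^d \operatorname{vol}_d(P_L)$ as $k \to \infty$, where the volume is computed with respect to the induced lattice on the affine span of $P_L$. This follows from a Riemann-sum argument: the lattice points in $kP_L$ correspond via rescaling by $1/k$ to the points of $\frac{1}{k}M$ lying in $P_L$, and their number divided by $k^d$ converges to the $d$-dimensional volume. Only this asymptotic is needed; the finer Ehrhart polynomial theory is not required.

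From this, both directions follow quickly. If $P_L$ has nonempty interior, then $d = n$ and $h^0(X_\Sigma, L^{\otimes k}) \sim \operatorname{vol}_n(P_L)\, k^n$ with $\operatorname{vol}_n(P_L) > 0$, so $L$ is big. Conversely, if $P_L$ has empty interior, then $d \leq n-1$ and $h^0(X_\Sigma, L^{\otimes k}) = O(k^{n-1})$, so $L$ cannot be big. The only step requiring a bit of care is the lattice-point asymptotic, which is the main (and only) technical ingredient, though it is entirely elementary.
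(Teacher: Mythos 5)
Your proof is correct. Note that the paper itself offers no argument for this statement --- it is quoted verbatim from the cited reference (Lemma 9.3.9 of Cox--Little--Schenck) --- and your argument is essentially the standard one underlying that citation: use $P_{L^{\otimes k}}=kP_L$ and the correspondence between sections and lattice points to get $h^0(X_\Sigma,L^{\otimes k})=\#\bigl(kP_L\cap M\bigr)$, then invoke the volume asymptotics for lattice-point counts. One small refinement: when $\dim P_L=d<n$, the affine span of $P_L$ is a rational affine subspace that may contain no lattice points at all, in which case the precise asymptotic $\#(kP_L\cap M)\sim k^d\,\mathrm{vol}_d(P_L)$ fails (the count oscillates, e.g.\ vanishing for odd $k$); but your argument only needs the upper bound $\#(kP_L\cap M)=O(k^{n-1})$ in that direction, which does always hold, so this is a matter of phrasing rather than a gap.
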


\begin{prop}
\cite[Lemma 6.1.13]{CLS11}
The line bundle $L$ is ample iff $g_L$ is concave and $v_{\sigma}\neq v_{\sigma'}$ whenever
$\sigma\neq \sigma' \in \Sigma(n)$.
\end{prop}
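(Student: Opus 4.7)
The plan is to combine the previous proposition with the toric Nakai--Moishezon criterion. Since ample implies nef, concavity of $g_L$ is automatic from that proposition, so the content to prove is that, assuming $g_L$ concave, $L$ is ample if and only if the vertices $v_\sigma$ are pairwise distinct as $\sigma$ runs over $\Sigma(n)$.

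First I would use the orbit-cone correspondence to identify the torus-invariant irreducible curves of $X_\Sigma$ with the walls $\tau \in \Sigma(n-1)$. In the complete case each wall is a face of exactly two maximal cones $\sigma$ and $\sigma'$, and the corresponding curve $C_\tau$ is the closure of the codimension-one orbit attached to $\tau$.

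Next I would carry out the standard toric computation of $L \cdot C_\tau$. Working either directly with the support function or by restricting $L$ to the normalization $C_\tau \cong \mathbb{P}^1$ and unwinding the transition data, one finds that $L \cdot C_\tau$ is, up to a positive combinatorial factor coming from the multiplicities of $\sigma$ and $\sigma'$, the pairing $\langle v_\sigma - v_{\sigma'}, u \rangle$ where $u \in N$ is a vector transverse to $\tau$ pointing from $\sigma'$ into $\sigma$. Concavity of $g_L$ forces this pairing to be nonnegative, and it vanishes precisely when $v_\sigma = v_{\sigma'}$, since the two characters already agree on the span of the wall $\tau$.

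By the toric Nakai criterion $L$ is ample iff $L\cdot C_\tau > 0$ for every wall, which by the above is equivalent to $v_\sigma \neq v_{\sigma'}$ for every pair of \emph{adjacent} maximal cones. To upgrade "adjacent" to "all distinct pairs" as stated, I would observe that if $v_\sigma = v_{\sigma'}$ for some (not necessarily adjacent) pair, then concavity of $g_L$ and the resulting linearity of $g_L$ along a chain of adjacent cones connecting $\sigma$ to $\sigma'$ would force equality on some adjacent pair inside that chain, again producing a curve with $L \cdot C_\tau = 0$. The main obstacle is the intersection formula for $L \cdot C_\tau$; the rest of the argument is essentially combinatorial bookkeeping on the fan.
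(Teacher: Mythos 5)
The paper offers no proof of this proposition at all: it is quoted verbatim from \cite[Lemma 6.1.13 and Theorem 6.1.14]{CLS11} (note the standing assumption $|\Sigma|=N_{\mathbb{R}}$ stated just before, which your argument needs for completeness). Your sketch is essentially correct, but it takes the intersection-theoretic route rather than the one in the cited reference: you reduce ampleness to positivity on the invariant curves $C_\tau$ attached to walls (toric Kleiman/Nakai criterion, \cite[Theorem 6.3.13]{CLS11}) and use the wall-crossing formula $L\cdot C_\tau = \pm\langle v_\sigma - v_{\sigma'}, u\rangle$, whereas the textbook proof works combinatorially with strict convexity of the support function and shows directly that the map to projective space defined by the lattice points of $P_L$ is a closed embedding. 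Your two nontrivial steps both hold up: the vanishing criterion ($L\cdot C_\tau=0$ iff $v_\sigma=v_{\sigma'}$, since the two characters agree on the span of $\tau$) and the upgrade from adjacent pairs to arbitrary pairs, where a generic segment joining interior points of $\sigma$ and $\sigma'$ gives, via concavity pinched between $g_L \le \langle v_\sigma,\cdot\rangle$ and the chord, linearity of $g_L$ along the segment and hence equality of characters across the first wall crossed (transversality of the segment to that wall is what makes agreement on the wall plus agreement along the segment force equality of linear forms). Two caveats: with $u$ pointing from $\sigma'$ into $\sigma$, concavity gives $\langle v_\sigma - v_{\sigma'},u\rangle \le 0$, not $\ge 0$, so your sign convention should be flipped (harmless, since only the vanishing locus matters); and if you invoke the toric Kleiman criterion you should make sure its proof in your reference does not itself rest on the strict-convexity characterization of ampleness --- the clean way to avoid circularity is to combine the general Kleiman criterion with the toric cone theorem (the Mori cone is spanned by the classes $[C_\tau]$). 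What your route buys is a numerical criterion that extends beyond the smooth case (e.g.\ to simplicial fans with $\mathbb{Q}$-Cartier divisors); what the textbook route buys is self-containedness, needing no intersection theory.
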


\begin{exa}
\label{anticanonical}
The anticanonical divisor $-K_{X_{\Sigma}}$ on a toric manifold is given by 
$-K_{X_{\Sigma}}=\sum_{\rho}D_{\rho}$. 
It is always big on a toric manifold. 
\end{exa}

\subsection{Non-negatively curved singular metrics on line bundles}

\subsubsection{Potential on the torus}

Let $L$ be a $T$-linearized line bundle on $X_{\Sigma}$. 

Recall that any linearized line bundle on $T\simeq (\mathbb{C}^*)^n$ is trivial. 
Fix an invariant trivialization $s$ of $L$ on $T$.  

Given a hermitian metric $h$ on the line bundle $L$, we denote by $\varphi_h$ the local 
potential of $h$ on $T$, which is the function on $T$ defined by:
$$\varphi_h(z):=-\ln(||s(z)||_h).$$

The local potentials of a smooth hermitian metric are smooth. We will work here with singular 
metrics, whose local potential are \emph{a priori} only in $L^1_{\mathrm{loc}}$.
A singular hermitian metric $h$ is said to have non negative curvature (in the 
sense of currents) if and only if every local potential of $h$ is a psh function.

A $T_c$-invariant function $\varphi$ on $T$ is determined by a function $f$ on $N_{\mathbb{R}}$, 
identified with the Lie algebra of $T_c$, through the equivariant isomorphism:
$$T_c\times N_{\mathbb{R}} \longrightarrow T; ~((e^{i\theta_j})_j,(x_j)_j)\mapsto (e^{x_j+i\theta_j})_j.$$
Furthermore, $\varphi$ is psh if and only if $f$ is convex.

So to a non negatively curved, $T_c$-invariant metric $h$ on $L$ is associated a convex 
function $f_h$, which is the function on $N_{\mathbb{R}}$ determined by $\varphi_h$.

\subsubsection{Behavior at infinity of the potentials}

\begin{defn}
Let $L$ be a nef line bundle on $X_{\Sigma}$. 
The function $f_L: x\mapsto -g_L(-x)$ is a convex function on $N_{\mathbb{R}}$, and it is 
the potential of a continuous, $T_c$-invariant, non negatively curved metric on $L$ called 
the Batyrev-Tschinkel metric (see \cite{Mai00}), which we denote by $h_L$. 
\end{defn}

\begin{prop}
\label{inf}
The map $h\mapsto f_h$ defines a bijection between the singular hermitian $T_c$-invariant metrics on $L$ 
with non-negative curvature, and the convex functions on 
$N_{\mathbb{R}}$, such that there exists a constant $C$ with $f_h\leq f_L + C$ on $N_{\mathbb{R}}$.
\end{prop}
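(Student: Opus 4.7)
The plan is to prove the two directions of the correspondence separately. Injectivity is immediate since $f_h$ determines $\varphi_h$ on $T$, and a singular hermitian metric on $L$ is determined by its potential on the dense open subset $T$.

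For the direct implication, the convexity of $f_h$ is an instance of the correspondence recalled just above between $T_c$-invariant psh functions on $T$ and convex functions on $N_{\mathbb{R}}$. For the upper bound, I would compare $h$ with the Batyrev-Tschinkel metric $h_L$ by considering $\psi := \varphi_h - \varphi_{h_L}$ on $T$. Since $h$ and $h_L$ are two hermitian metrics on the same line bundle, their local potentials transform identically under a change of trivialization, so $\psi$ is the restriction to $T$ of a globally defined function on $X_{\Sigma}$, essentially $\tfrac{1}{2}\log(h_L/h)$. Because $\varphi_h$ is psh and $\varphi_{h_L}$ is continuous, this global function is upper semi-continuous on the compact manifold $X_{\Sigma}$, hence bounded above by some constant $C$, which translates back to $f_h \leq f_L + C$ on $N_{\mathbb{R}}$.

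For the converse, given $f$ convex with $f \leq f_L + C$, set $\varphi(z) := f(\log|z_1|,\ldots,\log|z_n|)$ on $T$; this defines a $T_c$-invariant psh metric on $L|_T$. To extend it across $X_{\Sigma}$, I would work in each invariant affine chart $U_\sigma$ for $\sigma \in \Sigma(n)$. In an equivariant trivialization $s_\sigma$ of $L$ over $U_\sigma$, the potential on $T$ takes the form $x \mapsto f(x) - \langle v_\sigma, x\rangle$ (up to a sign depending on convention). A point of an orbit $O_\tau \subset U_\sigma \setminus T$, where $\tau$ is a face of $\sigma$, is approached from within $T$ by sequences along which $-x$ tends to infinity inside $\tau$ while the remaining coordinates stay bounded. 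Using $f \leq f_L + C$, the identity $f_L(-y) = -g_L(y)$, the coincidence $g_L \equiv \langle v_\sigma, \cdot\rangle$ on $\sigma$, and the Lipschitz property of the piecewise linear $g_L$ to absorb the bounded perturbation, one verifies that $x \mapsto f(x) - \langle v_\sigma, x\rangle$ is locally bounded above near every point of $U_\sigma \setminus T$. The standard extension theorem for psh functions across pluripolar sets then yields a psh extension over $U_\sigma$, and since all such extensions restrict to $\varphi$ on $T$ they glue into a global non-negatively curved metric $h$ on $L$ with $f_h = f$. The main technical point is this last combinatorial verification: the bound $f \leq f_L + C$ must be leveraged uniformly near every stratum $O_\tau$, not merely along approaches to the fixed points $z_\sigma$.
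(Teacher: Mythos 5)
Your argument is correct, and its skeleton coincides with the paper's: both directions hinge on comparing $h$ with the Batyrev--Tschinkel metric $h_L$. The forward direction is essentially the paper's proof in different words: your $\psi = \varphi_h - \varphi_{h_L}$ is exactly the function $v$ the paper introduces by writing $h = e^{-v}h_L$, and your observation that $\psi$ is upper semi-continuous on the compact manifold $X_\Sigma$ is precisely the reason the paper's $\omega_L$-psh function $v$ is bounded above. The genuine difference is in the converse. The paper extends the \emph{relative} potential $u := f - f_L$ to an $\omega_L$-psh function on all of $X_\Sigma$: since $h_L$ is a continuous metric on the whole manifold (this is built into the definition of the Batyrev--Tschinkel metric), the hypothesis $u \le C$ already supplies the local upper bounds near the analytic set $X_\Sigma \setminus T$ required by the extension theorem, with no further computation. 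You instead extend the \emph{absolute} potential $f - \langle v_\sigma, \cdot\rangle$ chart by chart, and must therefore re-derive those local bounds from the combinatorics of $g_L$; your sketch does close: writing $-x = y + b$ with $y \in \sigma$ and $b$ bounded, one gets $f(x) - \langle v_\sigma, x\rangle \le C + \left(\mathrm{Lip}(g_L) + |v_\sigma|\right)|b|$, using $f \le f_L + C$, the identity $g_L = \langle v_\sigma, \cdot\rangle$ on $\sigma$, and the Lipschitz bound for the concave piecewise linear $g_L$. In effect you re-prove the local boundedness of the potential of $h_L$ near the boundary divisors rather than citing its continuity; this makes your proof more self-contained but longer, and you correctly isolate the point --- uniformity near every stratum $O_\tau$, not merely near the fixed points $z_\sigma$ --- that the paper's formulation absorbs into the continuity of $h_L$.
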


\begin{proof}
See also \cite[Proposition 3.3]{BB13}. 
Let $h$ be a singular hermitian $T_c$-invariant metrics on $L$ 
with non-negative curvature. Write $h=e^{-v}h_L$, and let $\omega_L$ be the curvature 
current of $h_L$. Then $v$ is a $\omega_L$-psh function on $X$.
In particular, $v$ is bounded from above on $X$. Denote by $u$ the convex function 
on $\mathbb{R}^n$ associated to the $T_c$-invariant function $v|_T$. Then we see that $f_h(x)-f_L(x)=u(x)$ is 
bounded above on $N_{\mathbb{R}}$.

Conversely, the standard fact that a psh function, which is bounded from above,
extends uniquely over an analytic set, allows one to extend $u:=f-f_L$ to an $\omega_L$-psh 
function on the whole of $X$ if $f$ satisfies the condition of the proposition.  
\end{proof}

\section{Log canonical thresholds}
\label{seclct}

\subsection{Definition}

Let $X$ be a compact complex manifold, and $L$ a line bundle on $X$.
Let $h$ be a singular hermitian metric on $L$.
We recall the definition of the log canonical threshold of $h$ 
(see the appendix of \cite{CSD08}).

\begin{defn} 
Let  $z\in X$.
The complex singularity exponent $c_z(h)$ of $h$ at $z$ is the 
supremum of the real $c>0$ such that $e^{-2c\varphi}$ is integrable 
in a neighborhood of $z$, where $\varphi$ is a local potential of 
$h$ near $z$.
\end{defn}

\begin{defn}
The log canonical threshold $\mathrm{lct}(h)$ of $h$ is defined as 
$$\mathrm{lct}(h)=\mathrm{inf}_{z\in X}c_z(h).$$
\end{defn}

\subsection{Newton body of a function}

\begin{defn}
Let $\sigma$ be a cone. 
Let $f$ be a function defined on $N_{\mathbb{R}}$.
Define the Newton body of $f$ on $\sigma$ as 
$$N_{\sigma}(f)=\{m\in M_{\mathbb{R}}; f(x)-\langle m,x \rangle \geq O(1), ~ \forall x\in \sigma\}.$$ 
\end{defn}

If $\sigma=N_{\mathbb{R}}$ we will write $N(f)$.

The following properties of the Newton body will be useful. 

\begin{prop} 
\label{propN}
For any function $f$, $N_{\sigma}(f)$ is convex, and 
$$N_{\sigma}(f)=N_{\sigma}(f)-\sigma^{\vee}.$$ 
If $f$ is convex, then for any $y\in N_{\mathbb{R}}$, 
$$N_{\sigma}(f)=\{m\in M_{\mathbb{R}}; f(t)-\langle m,t \rangle \geq O(1), ~ \forall t\in y+\sigma\}.$$
\end{prop}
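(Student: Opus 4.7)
The proposition has three claims: the convexity of $N_\sigma(f)$, the identity $N_\sigma(f) = N_\sigma(f) - \sigma^\vee$, and (under convexity of $f$) the translation invariance of the cone. The first two are general, while the third is where convexity of $f$ enters essentially. My plan is to dispatch the first two by direct manipulation of the defining inequality, then focus on the third.

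For the convexity of $N_\sigma(f)$, if $m_1, m_2 \in N_\sigma(f)$ satisfy $f(x) - \langle m_i, x\rangle \geq -C_i$ on $\sigma$, then by linearity of the pairing in $m$, any convex combination $\lambda m_1 + (1-\lambda) m_2$ satisfies the inequality with constant $-\lambda C_1 - (1-\lambda) C_2$. For the identity $N_\sigma(f) = N_\sigma(f) - \sigma^\vee$, the inclusion $\supseteq$ is immediate from $0 \in \sigma^\vee$; for the reverse, given $m \in N_\sigma(f)$ and $m' \in \sigma^\vee$, I would decompose $f(x) - \langle m - m', x\rangle = (f(x) - \langle m, x\rangle) + \langle m', x\rangle$ and use $\langle m', x\rangle \geq 0$ on $\sigma$ (the defining property of the dual cone) to conclude the sum remains bounded below.

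For the third claim, I would first observe that $\langle m, y\rangle$ is a constant (independent of $x$) and so does not affect boundedness below, reducing the statement to: for the convex function $g = f - \langle m, \cdot\rangle$, $g$ is bounded below on $\sigma$ iff on $y + \sigma$. By swapping $y$ with $-y$ and $\sigma$ with $y + \sigma$, only one direction needs proof. Assuming $g \geq -C$ on $\sigma$, I would use the key convex estimate $g(y + x) \geq g(x) - g^\infty(-y)$, which follows from the monotonicity of the difference quotients of $g$ along the ray from $y + x$ in the direction $-y$ (parameterize by $\phi(s) = g((1-s)y + x)$, evaluate $(\phi(s) - \phi(0))/s \leq g^\infty(-y)$ at $s = 1$), to conclude $g \geq -C - g^\infty(-y)$ on $y + \sigma$, provided the recession value $g^\infty(-y)$ is finite.

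The main obstacle is the case $g^\infty(-y) = +\infty$, where the above bound is vacuous. In this case $g$ grows superlinearly in the direction $-y$, and I would handle it by contradiction: a sequence $t_n = y + x_n \in y + \sigma$ with $g(t_n) \to -\infty$ forces $|x_n| \to \infty$, and after passing to a subsequence with $x_n/|x_n| \to v \in \sigma$, the bound $g \geq -C$ along the ray $\{sv : s \geq 0\} \subset \sigma$ combined with the convexity and continuity of the finite convex function $g$ on $N_\mathbb{R}$ should yield the required contradiction, completing the proof of translation invariance and hence the proposition.
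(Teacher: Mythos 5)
Your handling of the first two claims is fine (the paper dismisses them as trivial), your reduction to a single implication by replacing $g$ with $g(\cdot+y)$ is valid, and your estimate $g(y+x)\geq g(x)-g^{\infty}(-y)$ correctly settles the case where the recession value is finite. The genuine gap is the case $g^{\infty}(-y)=+\infty$, and the route you sketch there cannot be completed. Convergence of the directions $x_n/|x_n|$ to $v$ does \emph{not} keep the points $y+x_n$ at bounded distance from the ray $\{sv : s\geq 0\}$, and boundedness of a convex function along that single ray (even together with convexity, continuity, and the condition $g^{\infty}(-y)=+\infty$) says nothing about its values along such a receding sequence. Concretely, take $g(x_1,x_2)=-x_2+e^{-x_1}$, $v=(1,0)$, $y=(1,0)$, $x_n=(n,\sqrt{n})$: then $g\geq 0$ on the ray $\{sv\}$, $x_n/|x_n|\to v$, $g^{\infty}(-y)=+\infty$, yet $g(y+x_n)=-\sqrt{n}+e^{-1-n}\to-\infty$. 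So no contradiction follows from the ingredients you list; any correct argument must use the hypothesis $g\geq -C$ on the \emph{whole} cone $\sigma$, at points adapted to $y+x_n$, and your sketch never says how to do that.

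Once the full-cone bound is brought in, the implication follows in one line with no case distinction at all, and this is exactly the paper's mechanism: a chord (midpoint) inequality. For $t=y+x$ with $x\in\sigma$, the point $x/2$ lies in $\sigma$ and is the midpoint of $t$ and $-y$, so convexity gives $g(x/2)\leq \frac{1}{2}\bigl(g(t)+g(-y)\bigr)$, hence $g(t)\geq 2g(x/2)-g(-y)\geq -2C-g(-y)$, a bound independent of $t\in y+\sigma$. (The paper runs the same trick in the opposite direction: from $f(x+y)\leq \frac{1}{2}\bigl(f(2x)+f(2y)\bigr)$ it gets $f(2x)-\langle m,2x\rangle \geq 2\bigl(f(t)-\langle m,t\rangle\bigr)+\bigl(2\langle m,y\rangle-f(2y)\bigr)$ and uses $2\sigma=\sigma$; either direction plus your translation symmetry gives the proposition.) I would drop the recession-function dichotomy entirely: it is what manufactures the degenerate case you could not close, and the midpoint argument handles both cases uniformly.
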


\begin{proof}
The first two properties are trivial. Let us briefly prove the last statement.

Let $m$ be in the right-hand set, \emph{i.e.} $\{f(t)-\langle m,t \rangle \geq O(1) ~ \forall t\in y+\sigma\}$.
Let $x=t-y\in \sigma$ for $t\in y+\sigma$.
By convexity, $f(x+y)\leq \frac{1}{2}(f(2x)+f(2y))$ so we get
$$f(2x)\geq 2f(x+y)-f(2y)=2f(t)-f(2y)$$
Subtracting $\langle m,2x \rangle $ gives 
$$f(2x)-\langle m,2x \rangle \geq 2(f(t)-\langle m,t \rangle )+(2\langle m,y \rangle -f(2y)).$$
The right hand side is the sum of a lower-bounded function of $t\in y+\sigma$ and a constant, 
so the left hand side is a lower-bounded function of $x\in \sigma$.

This shows one inclusion and the other is proved by a similar argument.
\end{proof}

Given a non negatively curved $T_c$-invariant metric $h$ on $L$, we define the 
associated convex subset $P_h$ of $M_{\mathbb{R}}$, as the Newton body of $f_h$.

\begin{prop}
\label{Polytope} \mbox{}
\begin{itemize}
\item For the Batyrev-Tschinkel metric $h_L$, we recover the polytope $P_L$.
\item For any $T_c$-invariant, non-negatively curved metric $h$ on $L$, $P_h\subset P_L$.
\item If $h$ is smooth, we also have $P_h=P_L$
\end{itemize}
\end{prop}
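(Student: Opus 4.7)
The three bullets are handled in sequence, with later ones reusing the first.

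For the first bullet, the plan is to unwind definitions. Since $f_{h_L}=f_L$ and $f_L(x)=-g_L(-x)$, the condition $m\in P_{h_L}=N(f_L)$ reads $-g_L(-x)-\langle m,x\rangle\geq O(1)$ on $N_{\mathbb{R}}$; with the substitution $y=-x$ this becomes the existence of $C$ with $\langle m,y\rangle-g_L(y)\geq -C$ for every $y\in N_{\mathbb{R}}$. On the other hand, $m\in P_L$ requires the sharp inequality $\langle m,y\rangle-g_L(y)\geq 0$. To pass from one to the other I use that $g_L$ is linear on each cone of $\Sigma$, hence positively $1$-homogeneous; applying the bounded-below estimate at $ty$ for $t>0$ and dividing by $t$ yields, as $t\to\infty$, the desired $\langle m,y\rangle\geq g_L(y)$. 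Thus $P_{h_L}=P_L$.

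For the second bullet, the inclusion $P_h\subset P_L$ falls out of Proposition \ref{inf}. That result supplies a constant $C$ with $f_h\leq f_L+C$. If $m\in P_h$, i.e.\ $f_h(x)-\langle m,x\rangle$ is bounded below, then
\[
f_L(x)-\langle m,x\rangle\;\geq\; f_h(x)-\langle m,x\rangle - C
\]
is bounded below as well, so $m\in N(f_L)=P_L$ by the first bullet.

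For the third bullet I need the reverse inclusion under the smoothness assumption. Since $h_L$ is continuous on $X$ and $h$ is smooth, the quotient $h/h_L=e^{-v}$ defines a continuous function $v$ on the compact manifold $X$, so $v$ is bounded. On the torus, $v$ corresponds via $T_c$-invariance to the difference $f_h-f_L$, so $|f_h-f_L|\leq C''$ on $N_{\mathbb{R}}$. Hence for any $m\in P_L=N(f_L)$ the function $f_h(x)-\langle m,x\rangle\geq f_L(x)-\langle m,x\rangle-C''$ is bounded below, giving $m\in P_h$. Combined with the previous bullet, $P_h=P_L$. The only genuinely delicate step is the homogeneity argument in the first bullet that upgrades the $O(1)$ estimate to a sharp inequality; once that is in place, the other two bullets are formal consequences of Proposition \ref{inf} and the compactness of $X$.
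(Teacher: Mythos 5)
Your proof is correct and follows essentially the same route as the paper: the second and third bullets are verbatim the paper's argument (comparison of Newton bodies via $f_h\leq f_L+C$ from Proposition \ref{inf}, and boundedness of $f_h-f_L$ on the compact manifold when $h$ is smooth). The only cosmetic difference is in the first bullet, where you upgrade the $O(1)$ bound to a sharp inequality by global positive homogeneity of $g_L$ and scaling, whereas the paper does the same thing cone-by-cone using linearity of $g_L$ on each cone together with the identity $N(f_L)=\bigcap_{\sigma}N_{-\sigma}(f_L)$; both rest on the same scaling mechanism.
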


\begin{proof}
For the first statement, observe that $m\in P_L$ if and only if for any cone $\sigma \in \Sigma$,
for all $x\in \sigma$, $g_L(x)=\langle v_{\sigma},x \rangle \leq ~\langle m,x \rangle$.
This inequality is equivalent to $-\langle v_{\sigma},x \rangle+\langle m,x \rangle \geq 0$ and since the functions involved 
are linear, it is satisfied for all $x\in \sigma$ if and only if 
$-\langle v_{\sigma},x \rangle+\langle m,x \rangle$ is bounded below on $\sigma$.
Since $f_L(-x)=-g_L(x)=-\langle v_{\sigma},x \rangle$ for $x\in \sigma$,  we get that $m\in P_L$ if and only
if for every cone $\sigma \in  \Sigma$, the function $f_L(-x)-\langle m,-x \rangle$ is bounded below 
on $\sigma$. 
Finally, this can be translated as: for every cone $\sigma \in  \Sigma$, the function $f_L(y)-\langle m,y \rangle$ is bounded below 
on $-\sigma$. 
To conclude, we note that $N(f_L)=\bigcap_{\sigma}N_{-\sigma}(f_L)$.

The second statement is an easy consequence of the first and Proposition~\ref{inf}
since whenever two functions $f$ and $g$ satisfy $f\leq g+C$ for a constant $C$, we have 
trivially $N_{\sigma}(f)\subset N_{\sigma}(g)$.

For the last statement, remark that in this case, $f_h-f_L$ extends to a continuous function on $X_{\Sigma}$,
so we have $f_L-C\leq f_h \leq f_L+C$ for some constant $C$. The same property of Newton bodies allows one to conclude.
\end{proof}

\subsection{Integrability condition}

The first result on log canonical thresholds on toric varieties was the computation by Howald \cite{How01}
in the case of monomial ideals. Guenancia gave an analytic proof of this result, extending the 
computation to the case of non algebraic psh functions. The key ingredient in this analytic 
version is the following integrability condition.

\begin{prop}
\label{Gue}
 (see \cite{Gue12})
Let $\sigma$ be a smooth cone of maximum dimension.
Let $f$ be a convex function on $N_{\mathbb{R}}$. Then $e^{-f}$ is integrable on all 
translates of $\sigma$ if and only if $0\in \mathrm{Int}(N_{\sigma}(f))$.
\end{prop}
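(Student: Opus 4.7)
The plan is to reduce to the model case $\sigma = \mathbb{R}_{\geq 0}^n$, in which $\sigma^{\vee}$ is also the standard orthant. This is available because $\sigma$ is smooth of maximum dimension, so its primitive generators form a $\mathbb{Z}$-basis of $N$. In these coordinates, the identity $N_{\sigma}(f)=N_{\sigma}(f)-\sigma^{\vee}$ from Proposition \ref{propN} says exactly that $N_\sigma(f)$ is closed under coordinatewise decrease.

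For the \emph{if} direction, assume $0 \in \mathrm{Int}(N_\sigma(f))$. Since $N_\sigma(f)$ is convex, contains a neighborhood of the origin, and is stable under subtraction of vectors with nonnegative coordinates, it contains some $m_0$ all of whose coordinates are strictly positive. By definition of the Newton body combined with Proposition \ref{propN}, for any $y$ there is a constant $C=C(y)$ such that $f(x) \geq \langle m_0,x\rangle -C$ on $y+\sigma$. Fubini then yields
$$\int_{y+\sigma} e^{-f(x)}\,dx \leq e^{C} \prod_{i=1}^n \int_{y_i}^{\infty} e^{-m_{0,i}\,x_i}\,dx_i < \infty.$$

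For the \emph{only if} direction I would argue the contrapositive. Assuming $0 \notin \mathrm{Int}(N_\sigma(f))$, Hahn--Banach separation of $0$ from the convex set $N_\sigma(f)$ produces a nonzero $v \in N_\mathbb{R}$ with $\langle m, v\rangle \leq 0$ for every $m \in N_\sigma(f)$; the $-\sigma^{\vee}$-stability of $N_\sigma(f)$ forces $v$ to lie in $(\sigma^{\vee})^{\vee}=\sigma$. This separation encodes a non-growth statement for $f$ in the direction $v$: combined with convexity, it bounds $f$ above by a constant on any suitable unbounded tube. Concretely, I would pick $y$ deep enough in $\sigma$ and a small ball $B \subset \mathrm{Int}(\sigma)$ so that $y+B+\mathbb{R}_{\geq 0}v$ lies inside $y+\sigma$, and establish a uniform upper bound on $f$ along this tube. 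The tube has infinite $n$-dimensional volume, so $e^{-f}$ is bounded below there by a positive constant and its integral over $y+\sigma$ diverges.

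The main obstacle is the last technical step of the \emph{only if} direction: promoting the abstract inequality $\sup_{m\in N_\sigma(f)}\langle m,v\rangle\leq 0$ into a genuine uniform upper bound for $f$ on a thickening of a ray in direction $v$. The convex-analytic ingredient is that this supremum controls the asymptotic slope of every one-dimensional restriction of $f$ along $v$, and convexity of $f$ in the transverse directions allows one to spread the bound from a single ray to a full tube, uniformly on bounded transverse neighborhoods.
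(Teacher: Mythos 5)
Your proposal is correct in outline but takes a genuinely different route from the paper. The paper performs the same first reduction (using the primitive generators of $\sigma$ as a basis to identify $\sigma$ with the standard orthant), but then simply applies \cite[Proposition 1.9]{Gue12} to the concave function $-f\circ S_{\sigma}^{-1}$, using Proposition~\ref{propN} only to pass from integrability on $D$ to integrability on all translates; all the analytic content is outsourced to Guenancia. You instead prove the statement from scratch. Your ``if'' direction (pick $m_0\in N_{\sigma}(f)$ with strictly positive coordinates, bound $f\geq\langle m_0,\cdot\rangle-C(y)$ on $y+\sigma$ via Proposition~\ref{propN}, and factor the integral by Fubini) is complete and correct. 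Your ``only if'' direction also has the right architecture: separation gives a nonzero $v$ with $\sup_{m\in N_{\sigma}(f)}\langle m,v\rangle\leq 0$, the stability $N_{\sigma}(f)=N_{\sigma}(f)-\sigma^{\vee}$ forces $v\in(\sigma^{\vee})^{\vee}=\sigma$ (this uses that $N_{\sigma}(f)\neq\emptyset$, which holds since any subgradient of $f$ lies in it), so the tube $y+B+\mathbb{R}_{\geq 0}v$ with $B$ a small ball inside $\sigma$ stays in $y+\sigma$, and a uniform upper bound for $f$ on that infinite-volume tube kills integrability. What your route buys is a self-contained, elementary proof; what the paper's buys is brevity.

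The step you flag as ``the main obstacle'' is indeed the only unproven point, but it does not fail: it is precisely the standard convex-duality fact that the support function of $N_{\sigma}(f)$ is the recession function of $f$ restricted to $\sigma$ (Rockafellar, Theorem 13.3), and it closes in a few lines via Fenchel--Moreau. Let $g=f+\delta_{\sigma}$, where $\delta_{\sigma}$ is the convex indicator of $\sigma$; then $g$ is closed, proper, convex, and $N_{\sigma}(f)=\mathrm{dom}(g^*)$ directly from the definitions. Biconjugation gives, for $x\in\sigma$,
$$f(x)=g^{**}(x)=\sup_{m\in N_{\sigma}(f)}\bigl(\langle m,x\rangle-g^*(m)\bigr),$$
so if $\langle m,v\rangle\leq 0$ for every $m\in N_{\sigma}(f)$, then for every $x_0\in\sigma$ and $t\geq 0$,
$$f(x_0+tv)=\sup_{m\in N_{\sigma}(f)}\bigl(\langle m,x_0\rangle+t\langle m,v\rangle-g^*(m)\bigr)\leq f(x_0).$$
Thus $f$ is non-increasing along $v$ from every point of $\sigma$, and the uniform bound on your tube is simply $\max_{y+\overline{B}}f$, finite by continuity of convex functions; no transverse ``spreading'' argument is needed, and $y$ need not be chosen deep in $\sigma$. (Note this yields divergence on every translate, which is more than the single translate you need.) With this lemma inserted, your proof is complete.
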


This is essentially the result in Guénancia \cite{Gue12}
because any smooth affine toric manifold with no torus factor is isomorphic to $\mathbb{C}^n$.
However we describe the change of variables used precisely, to use it later in the compact case.

\begin{proof}
Choose a basis of $N$ formed by the generators of the extremal rays of $\sigma$, then define
$S_{\sigma}$ to be the isomorphism from $N$ to $\mathbb{Z}^n$ sending the chosen basis to 
the canonical basis of $\mathbb{Z}^n$.

Let $f$ be a function on $N_{\mathbb{R}}$, and $g$ the function on $\mathbb{R}^n$ such that 
$f=g\circ S_{\sigma}$. Then from the definition of Newton body we have 
$N_{\sigma}(f)=S_{\sigma}^*(N_D(g))$, where $S_{\sigma}^*$ is the dual isomorphism 
from $\mathbb{Z}^n$ to $M$ and $D$ is the cone generated by the canonical basis  of 
$\mathbb{Z}^n$.

Using the change of variables, $e^{-f}$ is integrable on all translates of $\sigma$ 
if and only if $e^{-f\circ S_{\sigma}^{-1}}$ is integrable on all translates of $D$.
Apply \cite[Proposition 1.9]{Gue12} to the concave function $-f\circ S_{\sigma}^{-1}$.
This proves that we have integrability if and only if 
$0\in \mathrm{Int}(N_D(f\circ S_{\sigma}^{-1}))$.
Using $S_{\sigma}^*$, which is linear, this indeed translates to 
$0\in \mathrm{Int}(N_{\sigma}(f))$.

Remark that the statement in \cite[Proposition 1.9]{Gue12} only mentions integrability on 
$D$, but the equivalence with integrability on all translates is easily derived from
Proposition~\ref{propN}.
\end{proof}

\subsection{lct on an affine smooth toric manifold}

\begin{prop}
\label{lctaff}
Let $\sigma$ be a smooth cone of maximum dimension, $X_{\sigma}$ the corresponding 
smooth affine toric manifold. Let $L$ be a linearized line bundle on $X_{\sigma}$, and 
$h$ a $T_c$-invariant metric with non-negative curvature. Then 
$$\mathrm{lct}(h)=\mathrm{sup}\{c>0| cv_{\sigma} \in \mathrm{Int}(N_{-\sigma}(cf_h))-S_{\sigma}^*(1,\ldots,1)\}.$$  
\end{prop}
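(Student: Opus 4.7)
The plan is to reduce the computation of $\mathrm{lct}(h)$ to the single complex singularity exponent $c_{z_\sigma}(h)$ at the unique torus-fixed point, and then to apply Guénancia's integrability criterion (Proposition~\ref{Gue}) after an explicit change of variables.

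The first step is to show $\mathrm{lct}(h)=c_{z_\sigma}(h)$. Since $f_h$ is convex and hence locally bounded on $N_{\mathbb{R}}$, the potential $\varphi_h$ is locally bounded on $T$ and $c_z(h)=+\infty$ for $z\in T$. At a point $z$ lying in a positive-dimensional but non-dense orbit $O_\tau$ (indexed by a proper face $\tau$ of $\sigma$), the integrability of $e^{-2c\psi}$ near $z$ only probes the behavior of $f_h$ at infinity in directions lying in $-\tau$, which is a weaker condition than the one governing integrability at $z_\sigma$ (where all of $-\sigma$ enters); a slicing argument then gives $c_z(h)\geq c_{z_\sigma}(h)$, so the infimum is attained at $z_\sigma$.

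Second, I would identify $X_\sigma\cong\mathbb{C}^n$ via $S_\sigma$, with $z_\sigma=0$, and compute the local potential. Letting $s'$ be a local trivialization of $L$ near $z_\sigma$, the fact that the $T$-action on the fiber $L_{z_\sigma}$ has weight $-v_\sigma$ forces $s'=w^{v_\sigma}s$ on $T$, so
$$\psi(w)=f_h(x)-\langle v_\sigma,x\rangle,\qquad x_j=\log|w_j|.$$
Passing to polar coordinates $w_j=e^{x_j+i\theta_j}$ contributes a Jacobian $e^{2\langle\mathbf{1},x\rangle}$ with $\mathbf{1}=S_\sigma^*(1,\ldots,1)$, and substituting $y=-x$ turns the integrability of $e^{-2c\psi}$ in a neighborhood of $0$ into the integrability of $e^{-G(y)}$ on a translate of $\sigma$, where
$$G(y)=2cf_h(-y)+2c\langle v_\sigma,y\rangle+2\langle\mathbf{1},y\rangle.$$

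Finally, Proposition~\ref{propN} makes integrability on one translate of $\sigma$ equivalent to integrability on all translates, and Proposition~\ref{Gue} then converts the condition into $0\in\mathrm{Int}(N_\sigma(G))$. Unwinding the definition of the Newton body yields $N_\sigma(G)=2cv_\sigma+2S_\sigma^*(1,\ldots,1)-2N_{-\sigma}(cf_h)$, which rewrites as $cv_\sigma\in\mathrm{Int}(N_{-\sigma}(cf_h))-S_\sigma^*(1,\ldots,1)$; taking the supremum over $c$ gives the formula. The main obstacle is the first step: $\mathrm{lct}(h)$ is an infimum over the entire manifold, and one must uniformly control $c_z(h)$ across all orbits rather than just at the fixed point. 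The change-of-variables computation that follows is routine but demands careful tracking of signs and of the identification $\mathbf{1}\leftrightarrow S_\sigma^*(1,\ldots,1)$.
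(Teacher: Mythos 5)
Your proposal is correct and follows essentially the same route as the paper's proof: identify $X_\sigma$ with $\mathbb{C}^n$ via $S_\sigma$, use the equivariant trivialization (forced up to scalar by the weight $-v_\sigma$ at $z_\sigma$) so that $\psi = f_h - \langle v_\sigma,\cdot\rangle$ in logarithmic coordinates, pick up the Jacobian $e^{2\langle S_\sigma^*(1,\ldots,1),x\rangle}$, apply Proposition~\ref{Gue}, and unwind the Newton bodies — your sign bookkeeping and the identity $N_\sigma(G)=2cv_\sigma+2S_\sigma^*(1,\ldots,1)-2N_{-\sigma}(cf_h)$ check out. The one step you flag as the main obstacle (controlling $c_z(h)$ at points other than $z_\sigma$) is resolved in the paper without any orbit-by-orbit slicing: every point of $X_\sigma$ lies in the interior of some set $C_y$ (the closure of $T_c\times(y-\sigma)$, i.e.\ a polydisc), and since Proposition~\ref{Gue} yields integrability on \emph{all} translates of $-\sigma$ simultaneously whenever $c<c_{z_\sigma}(h)$, one gets $c_z(h)\geq c_{z_\sigma}(h)$ for every $z$ at once.
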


\begin{proof}
The change of variables for cones $S_{\sigma}$ in the proof of Proposition~\ref{Gue} gives (by \cite[Theorem 3.3.4]{CLS11})
an equivariant isomorphism between $X_{\sigma}$ and $\mathbb{C}^n$, which we denote again by $S_{\sigma}$.

Any linearized line 
bundle on $\mathbb{C}^n$ is trivial, so $L$ admits a global equivariant trivialization $t$ on $X_{\sigma}$. 
Remark that, at the fixed point $z_{\sigma}$, we have $g\cdot t(z_{\sigma}) = -v_{\sigma}(t(z_{\sigma}))$
by definition of $v_{\sigma}$. Restricting to $T$ and remembering that $s$ is an invariant trivialization
of $L$ on $T$, we deduce that up to renormalization by a constant, $t(z)=v_{\sigma}(z)s(z)$ on $T$.

We can now look at the potential $\psi$ of $h$ with respect to the trivialization $t$, and remark that, 
on $T$, and if $\varphi$ denotes the potential of $h$ with respect to $s$ on $T$, we have 
$\psi(z)=\langle -v_{\sigma},\ln|z| \rangle + \varphi(z)$. 

Let $y\in N_{\mathbb{R}}$. Using again the isomorphism $T_c\times N_{\mathbb{R}}\simeq T$, we consider
$T_c \times (y-\sigma)$ as a subset of $T$, and denote by $C_y$ the closure of this set in $X_{\sigma}$.
Each set $C_y$ is a neighborhood of $z_{\sigma}$ in $X_{\sigma}$, and they form a basis of neighborhoods. 
Observe that the collection of the translates of $-\sigma$ cover $N_{\mathbb{R}}$ and so 
the corresponding sets cover $X_{\sigma}$. More precisely, for any point $z$ in $X_{\sigma}$, there
is a translate of $-\sigma$ which corresponds to a neighborhood of $z$.

We consider first the complex singularity exponent of $h$ at $z_{\sigma}$. Suppose
$c>0$ is such that $e^{-2c\psi}$ is integrable in a neighborhood of $z_{\sigma}$.
Then it is integrable in a neighborhood $C_y$.
We have first that,
$$\int_{C_y} e^{-2c\psi(z)}dz\wedge d\overline{z} = \int_{T_c \times (y-\sigma)} e^{-2c\psi(z)}dz\wedge d\overline{z}.$$ 
Recall that $\psi(z)=\langle -v_{\sigma},\ln|z| \rangle + \varphi(z)$, and that $f$ is the function on $N_{\mathbb{R}}$ such that $f(x)=\varphi(e^x)$.

Say we have chosen a basis of $N$ or equivalently of $M$, and we denote by $(x_i)_{i=1\ldots n}$ the 
coordinates of $x\in N_{\mathbb{R}}$ along this basis. This determines local holomorphic coordinates
$z_i= e^{x_i+i \theta_i}$ on $T \simeq N_{\mathbb{R}}\times T_c$. 
Using the fact that 
$\frac{dz_i}{z_i} \wedge \frac{d\overline{z_i}}{\overline{z_i}}= dx_i \wedge d\theta_i$, 
and $T_c$-invariance, we obtain that, up to a constant,                           
$$\int_{C_y} e^{-2c\psi(z)}dz\wedge d\overline{z} = 
\int_{y-\sigma} e^{-2c(f(x)+\langle -v_{\sigma},x \rangle)} e^{2\sum_i x_i}dx.$$

Since $\sum_i x_i$ is equal to $\langle S_{\sigma}^*(1,\ldots ,1),x \rangle$, 
we conclude by using Proposition~\ref{Gue}
that the complex singularity exponent $c_{z_{\sigma}}(h)$ is the supremum of the $c>0$ such that 
$0\in \mathrm{Int}(N_{-\sigma}(2c(f+\langle -v_{\sigma},\cdot  \rangle)-2\langle  S_{\sigma}^*(1,\ldots, 1),\cdot  \rangle))$.

To obtain a simpler condition, remark that for any function $g$ and positive scalar $\lambda$, 
$N_{-\sigma}(\lambda g)=\lambda N_{-\sigma}(g)$, and that if $g_1$ and $g_2$ are two convex functions then $N_{-\sigma}(g_1+g_2)$ is 
the Minkowski sum of $N_{-\sigma}(g_1)$ and $N_{-\sigma}(g_2)$.

So we get 
$c_{z_{\sigma}}(h)=\mathrm{sup}\{c>0| cv_{\sigma} \in \mathrm{Int}(N_{-\sigma}(cf))-S_{\sigma}^*(1,\ldots,1)\}.$

Furthermore, for any $c<c_{z_{\sigma}}(h)$, the Proposition~\ref{Gue} shows that  $e^{-2c\psi}$ is
integrable on every $C_y$ for $y\in  N_{\mathbb{R}}$.
Observe now that for any point $z\in X_{\sigma}$, there exists a $C_y$ containing $z$. 
So for any point $z\in X_{\sigma}$, $c_{z}(h)\geq c_{z_{\sigma}}(h)$.
This concludes the proof of the proposition.
\end{proof}

\subsection{lct on a compact smooth toric manifold}

\begin{thm}
\label{calclct}
Let $X_{\Sigma}$ be a smooth compact toric manifold, $L$ a linearized line bundle 
on $X_{\Sigma}$ and $h$ a $T_c$-invariant non-negatively curved metric on $L$.
Then 
$$\mathrm{lct}(h)=\mathrm{sup}\{c>0|cP_L\subset \mathrm{Int}(cP_h+P_{-K_{X_{\Sigma}}}) \}.$$\end{thm}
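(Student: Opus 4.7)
The plan is to reduce the global log canonical threshold to the affine computation at each toric fixed point via compactness, and then repackage the resulting local conditions as the single polytopal inequality in the statement. First, since $X_\Sigma$ is compact and the affine charts $X_\sigma$, $\sigma\in\Sigma(n)$, form a $T$-invariant cover, and since the proof of Proposition \ref{lctaff} gives $c_z(h)\geq c_{z_\sigma}(h)$ for every $z\in X_\sigma$, one obtains $\mathrm{lct}(h)=\min_{\sigma\in\Sigma(n)} c_{z_\sigma}(h)$.

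Next, one recognizes in the formula of Proposition \ref{lctaff} the anti-canonical data. Example \ref{anticanonical} together with $\langle v_\sigma^{-K},u_\rho\rangle=-1$ on the rays of $\sigma$ gives $v_\sigma^{-K}=-S_\sigma^*(1,\dots,1)$, and the same computation as in the proof of Proposition \ref{Polytope} yields $N_{-\sigma}(f_{-K})=v_\sigma^{-K}+\sigma^\vee$. Since $N_{-\sigma}(f_h)+\sigma^\vee=N_{-\sigma}(f_h)$ by Proposition \ref{propN}, one has $cN_{-\sigma}(f_h)+v_\sigma^{-K}=cN_{-\sigma}(f_h)+N_{-\sigma}(f_{-K})$, and after the harmless translation $\mathrm{Int}(A)+v=\mathrm{Int}(A+v)$ the local formula becomes
\[ c_{z_\sigma}(h)=\sup\bigl\{c>0:\,cv_\sigma\in\mathrm{Int}\bigl(cN_{-\sigma}(f_h)+N_{-\sigma}(f_{-K})\bigr)\bigr\}. \]
The key technical step is then the global gluing identity
\[ cP_h+P_{-K}=\bigcap_{\sigma\in\Sigma(n)}\bigl(cN_{-\sigma}(f_h)+N_{-\sigma}(f_{-K})\bigr), \]
whose inclusion $\subset$ is immediate. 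For $\supset$, the equality $\bigcup_\sigma(-\sigma)=N_{\mathbb{R}}$ forces $N(g)=\bigcap_\sigma N_{-\sigma}(g)$ for every convex $g$; combined with the cone Minkowski rule from the proof of Proposition \ref{lctaff}, the right-hand side becomes $N(cf_h+f_{-K})$. The left-hand side coincides with it by the global Minkowski identity $N(g_1)+N(g_2)=N(g_1+g_2)$, which follows from $N(g)=\mathrm{dom}(g^*)$ and the Fenchel duality $(g_1+g_2)^*=g_1^*\square g_2^*$.

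Finally, interior commutes with finite intersection, so the gluing identity yields $\mathrm{Int}(cP_h+P_{-K})=\bigcap_\tau\mathrm{Int}(cN_{-\tau}(f_h)+N_{-\tau}(f_{-K}))$. Hence $cP_L\subset\mathrm{Int}(cP_h+P_{-K})$ iff $cP_L\subset\mathrm{Int}(cN_{-\tau}(f_h)+N_{-\tau}(f_{-K}))$ for every $\tau$. For fixed $\tau$, the local set and its interior are stable under translation by $\tau^\vee$ (again by Proposition \ref{propN}), while $cP_L\subset cv_\tau+\tau^\vee$ because $P_L=\bigcap_\tau(v_\tau+\tau^\vee)$ as computed in the proof of Proposition \ref{Polytope}. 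These two observations reduce the inclusion at $\tau$ to the single vertex condition $cv_\tau\in\mathrm{Int}(cN_{-\tau}(f_h)+N_{-\tau}(f_{-K}))$, which is exactly the local condition extracted from Proposition \ref{lctaff}. Taking suprema gives $\sup\{c>0:\,cP_L\subset\mathrm{Int}(cP_h+P_{-K})\}=\min_\sigma c_{z_\sigma}(h)=\mathrm{lct}(h)$. The main obstacle is the gluing identity: because the Newton bodies $N_{-\sigma}(f_h)$ are unbounded, establishing the global Minkowski identity requires the Fenchel-duality detour (or an equivalent direct convexity argument modelled on the cone case treated inside Proposition \ref{lctaff}).
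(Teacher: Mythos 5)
Your proposal follows the paper's own proof step for step: the localization $\mathrm{lct}(h)=\min_{\sigma}c_{z_{\sigma}}(h)$ via Proposition~\ref{lctaff}, the identification $v_{\sigma}^{-K}=-S_{\sigma}^*(1,\ldots,1)$ with $N_{-\sigma}(f_{-K_{X_{\Sigma}}})=v_{\sigma}^{-K}+\sigma^{\vee}$, and the passage between the vertex condition $cv_{\tau}\in\mathrm{Int}(\cdot)$ and the inclusion $cP_L\subset\mathrm{Int}(\cdot)$ using stability under $+\tau^{\vee}$ and $P_L\subset v_{\tau}+\tau^{\vee}$ are exactly the paper's steps (in both your argument and the paper's, the direction recovering the vertex condition from the inclusion needs $v_{\tau}\in P_L$, i.e.\ $L$ nef, which is implicit in the paper's setting). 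Where you go beyond the paper is in isolating the gluing identity $cP_h+P_{-K_{X_{\Sigma}}}=\bigcap_{\sigma}\bigl(cN_{-\sigma}(f_h)+N_{-\sigma}(f_{-K_{X_{\Sigma}}})\bigr)$ as the crux: the paper ends its chain of equivalences at $\mathrm{Int}(N(cf_h+f_{-K_{X_{\Sigma}}}))$ and identifies this with $\mathrm{Int}(cP_h+P_{-K_{X_{\Sigma}}})$ with no comment. You located the real content correctly.

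But your justification of that identity is a genuine gap. The Fenchel duality $(g_1+g_2)^*=g_1^*\square g_2^*$ is a statement about \emph{convex} functions (for general functions one only gets $(g_1+g_2)^*\leq g_1^*\square g_2^*$, i.e.\ the trivial inclusion $N(g_1)+N(g_2)\subset N(g_1+g_2)$), and $f_{-K_{X_{\Sigma}}}$ is convex if and only if $-K_{X_{\Sigma}}$ is nef; the theorem's hypotheses only guarantee that $-K_{X_{\Sigma}}$ is big. Moreover the gap is not repairable: the identity is false without that convexity. Take the Hirzebruch surface with rays $(1,0),(0,1),(-1,3),(0,-1)$, where $-K$ is not nef and $P_{-K}$ is the triangle with vertices $(-1,-2/3)$, $(-1,1)$, $(4,1)$, and let $f_h(x)=\max(0,x_1)$, so that $P_h$ is the segment from $(0,0)$ to $(1,0)$. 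Then $\bigcap_{\sigma}\bigl(N_{-\sigma}(f_h)+N_{-\sigma}(f_{-K})\bigr)=\{-1\leq m_2\leq 1,\ -1\leq m_1\leq 3m_2+2\}$, which strictly contains $P_h+P_{-K}$ (every point of the latter has $m_2\geq -2/3$). In fact the theorem itself fails here: $h$ is a genuine metric on the nef and big bundle with polytope $P_L=\{m_1\geq -1,\ m_2\leq 1,\ m_1\leq 3m_2+2\}$, and its lct is $1$ (by Proposition~\ref{lctaff}, or directly: in adapted coordinates at each fixed point the potential is $\ln|w_1w_2|$), while the right-hand side of the theorem equals $2/3$, the vertex $(-c,-c)$ of $cP_L$ leaving $\mathrm{Int}(cP_h+P_{-K})$ at $c=2/3$. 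So be aware of two things: first, your Fenchel step (hence your proof, hence the theorem as stated) is correct exactly when $-K_{X_{\Sigma}}$ is nef, or when $P_h$ is a single point, in which case the gluing identity is trivial --- and the point case is all that is used later in Theorem~\ref{Galpha} and Proposition~\ref{lctanti}; second, this defect is inherited from the paper, whose own proof performs the same unjustified identification silently, so your write-up is faithful to the paper and has the merit of making the problematic step visible.
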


\begin{proof}
The compact manifold $X_{\Sigma}$ is covered by the affine toric manifolds $X_{\sigma}$, for 
$\sigma \in \Sigma(n)$.
By definition of the log canonical threshold, 
$$\mathrm{lct}(h)=\mathrm{min}_{\sigma \in \Sigma(n)}\mathrm{lct}(h|_{Z_{\sigma}}).$$
Another way to say this is that $\mathrm{lct}(h)$ is the sup of $c>0$ such that 
$c \leq \mathrm{lct}(h|_{X_{\sigma}})$ for all $\sigma \in \Sigma(n)$.

Now this condition means, by Proposition~\ref{lctaff}, that for all $\sigma \in \Sigma(n)$,
$$cv_{\sigma} \in \mathrm{Int}(N_{-\sigma}(cf_h+\langle -S_{\sigma}^*(1,\ldots,1),\cdot \rangle).$$

By Proposition~\ref{propN}, this is equivalent to the condition that for all $\sigma \in \Sigma(n)$,
$$cv_{\sigma} + \sigma^{\vee} \subset \mathrm{Int}(N_{-\sigma}(cf_h+\langle -S_{\sigma}^*(1,\ldots,1),\cdot \rangle).$$

This is further equivalent to the condition that for all $\sigma \in \Sigma(n)$,    %maybe explain why this is equivalent:
%v_{\sigma}\in P_L \subset v_{\sigma} + \sigma^{\vee}
$$\bigcap_{\sigma\in \Sigma(n)}(cv_{\sigma}+\sigma^{\vee}) \subset \mathrm{Int}(N_{-\sigma}(cf_h+\langle -S_{\sigma}^*(1,\ldots,1),\cdot \rangle).$$

Recall from Proposition~\ref{Polytope}
that $\bigcap_{\sigma\in \Sigma(n)}(v_{\sigma}+\sigma^{\vee}) = N(f_L)= P_L$, so that the condition can be written: 
$$N(cf_L)\subset \bigcap_{\sigma \in \Sigma(n)}  \mathrm{Int}(N_{-\sigma}(cf_h+\langle -S_{\sigma}^*(1,\ldots,1),\cdot \rangle)= \mathrm{Int}(N(cf_h+f_{-K_{X_{\Sigma}}})).$$
Indeed, the support function of the anticanonical bundle is, from Example~\ref{anticanonical}, 
$$f_{-K_{X_{\Sigma}}}(x)=\langle -S_{\sigma}^*(1,\ldots,1),x \rangle.$$
\end{proof}

\section{Alpha-invariant}

\subsection{Log canonical threshold and $\alpha$-invariant}

Let $X$ be a compact Kähler manifold, $L$ a big and nef line bundle on $X$.

\begin{defn}
Assume that a compact group $K$ acts on $X$, and that $L$ is $K$-linearized.
The alpha invariant $\alpha_K(L)$ of $L$ with respect to the group $K$ is defined as the infimum 
of the log canonical thresholds of all $K$-invariant, non negatively curved 
singular hermitian metrics on $L$. 
\end{defn}

The linear systems in a multiple of $L$ give singular metrics on $L$, that we will 
call algebraic metrics, in the following way.
Let $\delta_1,\ldots, \delta_r \in H^0(X,mL)$ be linearly independent sections, and 
denote by $\Delta$  the linear system generated by these.
Then it defines an algebraic metric $h_{\Delta/m}$ on $L$ by setting, in any trivialization,
$$||\xi||^2_{h_{\Delta/m}}=\frac{|\xi|^2}{(\sum |\delta_j(z)|^2)^{1/m}},$$
for any $\xi \in L_z$.
The local potential $\varphi_{\Delta/m}(z)=\frac{1}{2m}\mathrm{ln}\sum|\delta_j(z)|^2$ is psh.

If $\Delta$ is one dimensional, generated by $\delta$, we denote by $h_{\delta/m}$ the corresponding metric.

Recall the following result of Demailly, relating the $\alpha$-invariant with log canonical thresholds
of algebraic metrics:
\begin{thm}
\cite[Appendix A]{CSD08}
Let $K$ be a compact group, let $X$ be a compact complex $K$-variety and $L$ a big and nef
$K$-linearized line bundle on $X$. Then $$\alpha_K(L)=\mathrm{inf}_{m\in \mathbb{N}^*}\mathrm{inf}_{\Delta \subset H^0(X,mL),~ \Delta^K=\Delta} \mathrm{lct}(h_{\Delta/m}).$$
\end{thm}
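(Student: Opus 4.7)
The plan is to prove the two inequalities separately. The inequality $\alpha_K(L) \leq \mathrm{inf}_{m,\Delta}\, \mathrm{lct}(h_{\Delta/m})$ is immediate once one checks that every algebraic metric $h_{\Delta/m}$ associated to a $K$-stable subspace $\Delta \subset H^0(X,mL)$ is itself a $K$-invariant non-negatively curved singular hermitian metric on $L$: non-negative curvature is clear from the definition of $\varphi_{\Delta/m}$, and $K$-invariance follows by choosing an orthonormal basis of $\Delta$ with respect to a $K$-invariant Hermitian inner product (available by averaging over Haar measure, since $K$ is compact), together with the fact that $\sum_j |\delta_j|^2$ is independent of the choice of such orthonormal basis. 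Hence $h_{\Delta/m}$ is admissible in the definition of $\alpha_K(L)$.

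For the reverse inequality, the plan is to approximate every $K$-invariant non-negatively curved singular metric $h$ on $L$ by algebraic $K$-invariant metrics. Fix a smooth reference metric $h_0$ and write $h = e^{-2\varphi} h_0$ with $\varphi$ quasi-psh and $K$-invariant. For each $m \in \mathbb{N}^*$, form the finite-dimensional Hilbert space $H_m \subset H^0(X,mL)$ of sections $s$ satisfying $\int_X |s|^2_{h_0^m}\, e^{-2m\varphi}\, dV < +\infty$, equipped with this $L^2$ inner product. Since $\varphi$ is $K$-invariant, $K$ acts unitarily on $H_m$, so $\Delta_m := H_m$ is automatically $K$-stable. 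Picking any orthonormal basis $\{\sigma_{m,j}\}$ of $\Delta_m$ produces the algebraic metric $h_{\Delta_m/m}$ with local potential $\varphi_m = \frac{1}{2m}\log \sum_j |\sigma_{m,j}|^2_{h_0^m}$.

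The core technical step is Demailly's two-sided comparison between $\varphi$ and $\varphi_m$. The sub-mean value inequality applied to the sections $\sigma_{m,j}$ over balls of radius $r = m^{-1/2}$ yields $\varphi_m \leq \varphi + O(\log m/m)$ pointwise; the Ohsawa--Takegoshi $L^2$ extension theorem, applied at each $z_0 \in X$ to extend the constant germ $1$ at $z_0$ to a section of $mL$ whose $L^2$ norm against $e^{-2m\varphi}$ is bounded by a uniform constant, furnishes the matching lower bound $\varphi_m \geq \varphi - C/m$. Via the standard comparison of multiplier ideal sheaves these estimates imply $\mathrm{lct}(h_{\Delta_m/m}) \to \mathrm{lct}(h)$ as $m \to +\infty$, and so in particular $\mathrm{inf}_{m,\Delta}\,\mathrm{lct}(h_{\Delta/m}) \leq \mathrm{lct}(h)$. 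Taking the infimum over all admissible $h$ then gives $\mathrm{inf}_{m,\Delta}\,\mathrm{lct}(h_{\Delta/m}) \leq \alpha_K(L)$.

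The main obstacle is the application of Ohsawa--Takegoshi: one needs the correct curvature positivity for a suitable twist of $mL$, which is available here because $L$ is big and nef (one may add a small multiple of a Kähler form to the curvature of $h_0$, absorbing the perturbation into the $O(1/m)$ error term). A minor additional point is to preserve $K$-invariance throughout the construction; this is automatic because $K$ acts unitarily on the $L^2$ space $H_m$, so $\Delta_m = H_m$ is $K$-stable without any explicit symmetrization.
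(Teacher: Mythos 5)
This theorem is not proved in the paper at all: it is recalled verbatim, with attribution, from Demailly's appendix to \cite{CSD08}, and the text only uses it as a black box before refining it to the corollary immediately following it (restriction to irreducible subrepresentations). So the comparison can only be with the proof in the cited source, and your proposal is essentially a reconstruction of that proof: the easy inequality because each algebraic metric $h_{\Delta/m}$ attached to a $K$-stable $\Delta$ is an admissible competitor in the definition of $\alpha_K(L)$, and the hard inequality by Demailly's Bergman-kernel approximation of an arbitrary $K$-invariant non-negatively curved metric $h$, with the mean-value estimate bounding $\varphi_m$ from above, Ohsawa--Takegoshi bounding it from below, and the multiplier-ideal comparison giving $\mathrm{lct}(h_{\Delta_m/m})\to\mathrm{lct}(h)$, hence the infimum identity. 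The overall structure is correct and is the intended argument.

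Three technical points in your write-up need repair, all standard but worth naming. First, $K$ acts unitarily on $H_m$ only after you also make the reference data $K$-invariant: you must average the smooth metric $h_0$, the volume form $dV$ (and the K\"ahler form) over the Haar measure of $K$; the $K$-invariance of $\varphi$ that you invoke is itself only available once $h_0$ is chosen invariant. Second, the mean-value step does not give the pointwise bound $\varphi_m\le\varphi+O(\log m/m)$ you state: since $\varphi$ is merely upper semicontinuous, what one gets is $\varphi_m(z)\le\sup_{B(z,r)}\varphi+O(\log m/m)$, and it is this sup-over-balls version that enters the comparison of complex singularity exponents. Third, and most substantively, your fix for the positivity needed in Ohsawa--Takegoshi is incorrect as stated: on a compact $X$ you cannot ``add a small multiple of a K\"ahler form to the curvature of $h_0$'' and remain inside $c_1(L)$ --- that changes the cohomology class, hence the bundle whose sections you are extending. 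The correct device, which is what the cited proof uses, is to exploit bigness of $L$ to produce a singular metric $e^{-2\psi_+}h_0$ on $L$ whose curvature current dominates $\epsilon_0\omega$, and to run the extension with the hybrid weight $(m-p)\varphi+p\psi_+$ for a fixed large $p$; the resulting perturbation of the weight is of relative size $p/m$ and disappears from the log canonical threshold comparison as $m\to\infty$. With these corrections your argument is a faithful version of the proof in \cite[Appendix A]{CSD08}.
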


One can slightly improve this result, and give the following statement, which is only given in
the case of a trivial group $K$ by Demailly.

\begin{cor}
\label{metirr}
Let $K$ be a compact group, let $X$ be a compact complex $K$-variety and $L$ a big and nef
$K$-linearized line bundle on $X$. Then 
$$\alpha_K(L)=\mathrm{inf}_{m\in \mathbb{N}^*}\mathrm{inf}_{\Delta \in \mathrm{Irr}(H^0(X,mL))} \mathrm{lct}(h_{\Delta/m}),$$
where $\mathrm{Irr}(H^0(X,mL))$ denotes the set of all irreducible $K$-subrepresentations of $H^0(X,mL)$.
\end{cor}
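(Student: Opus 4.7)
The plan is to reduce from arbitrary $K$-invariant linear systems to irreducible $K$-subrepresentations, relying on a simple pointwise comparison of the psh potentials that appear.

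First I would quote the theorem just stated, which expresses $\alpha_K(L)$ as $\inf_{m}\inf_{\Delta}\mathrm{lct}(h_{\Delta/m})$ over all $K$-invariant linear systems $\Delta \subseteq H^0(X,mL)$. The inequality
$$\alpha_K(L) \leq \inf_{m}\inf_{\Delta \in \mathrm{Irr}(H^0(X,mL))} \mathrm{lct}(h_{\Delta/m})$$
is then automatic, since every irreducible $K$-subrepresentation is in particular a $K$-invariant subspace, so the infimum on the right is taken over a subfamily.

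For the reverse inequality I would fix $m$ and a $K$-invariant $\Delta \subseteq H^0(X,mL)$. Since $K$ is compact and $\Delta$ is finite-dimensional, I can decompose $\Delta = \bigoplus_{i=1}^{N} \Delta_i$ into irreducible $K$-subrepresentations. Choosing bases $(\delta_{i,j})_j$ of each $\Delta_i$, the local potential is
$$\varphi_{\Delta/m}(z) = \frac{1}{2m}\ln \sum_{i,j} |\delta_{i,j}(z)|^2 \;\geq\; \frac{1}{2m}\ln \sum_j |\delta_{i,j}(z)|^2 = \varphi_{\Delta_i/m}(z)$$
for each index $i$, simply because each partial sum is dominated by the full sum. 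Consequently $e^{-2c\varphi_{\Delta/m}} \leq e^{-2c\varphi_{\Delta_i/m}}$ pointwise, so integrability of the right-hand side near any $z$ implies integrability of the left-hand side. This gives $c_z(\varphi_{\Delta/m}) \geq c_z(\varphi_{\Delta_i/m})$ for every $z$ and every $i$, whence $\mathrm{lct}(h_{\Delta/m}) \geq \mathrm{lct}(h_{\Delta_i/m})$ for each $i$ and in particular
$$\mathrm{lct}(h_{\Delta/m}) \geq \min_i \mathrm{lct}(h_{\Delta_i/m}) \geq \inf_{\Delta' \in \mathrm{Irr}(H^0(X,mL))} \mathrm{lct}(h_{\Delta'/m}).$$
Taking the infimum over $\Delta$ and $m$ yields the reverse inequality.

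I do not expect a genuine obstacle: the argument is essentially a one-line refinement of Demailly's theorem, using only that the potential of a direct sum dominates that of every summand together with complete reducibility of finite-dimensional continuous representations of a compact group. The main thing to be careful about is the direction of the inequality (it is the minimum, not the maximum, that controls $\mathrm{lct}(h_{\Delta/m})$ from below), and ensuring that the chosen decomposition is indeed into $K$-subrepresentations so that each $h_{\Delta_i/m}$ is itself $K$-invariant and hence an admissible metric for the definition of $\alpha_K(L)$.
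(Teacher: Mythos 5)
Your proof is correct and follows essentially the same route as the paper: decompose the $K$-invariant system $\Delta$ into irreducible summands, observe that the potential $\varphi_{\Delta/m}$ dominates the potential of each summand since the logarithm is increasing, and conclude by monotonicity of the complex singularity exponent that $\mathrm{lct}(h_{\Delta/m})\geq \mathrm{lct}(h_{\Delta_i/m})$. The only cosmetic differences are that you spell out the trivial inequality and the integrability comparison explicitly (the paper cites \cite[1.4]{DK01} and compares against a single summand $\Delta_1$), which does not change the substance of the argument.
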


\begin{proof}
Let $\Delta$ be a $K$-invariant subspace of $H^0(X,mL)$, 
then $\Delta=\Delta_1\oplus \cdots \oplus \Delta_s$ with $\Delta_i$ irreducible subspaces.
For all $i$, one can choose a basis $\delta_j^i$ of $\Delta_i$. 
Together they form a basis of $\Delta$ and we can obtain the metric $h_{\Delta}$ this way.

In particular, $\varphi_{\Delta/m}(z)=\frac{1}{2m}\mathrm{ln}\sum_i\sum_j|\delta^i_j(z)|^2$.
Since the logarithm is increasing we can write
$$\varphi_{\Delta/m}(z)\geq \frac{1}{2m}\mathrm{ln}\sum|\delta^1_j(z)|^2 =\varphi_{\Delta_1/m}(z).$$

This implies, by elementary properties of the complex singularity exponent, \cite[1.4]{DK01}
that $\mathrm{lct}(h_{\Delta/m})\geq \mathrm{lct}(h_{\Delta_1/m})$.

We conclude that the log canonical threshold of a metric associated to a $K$-invariant 
linear system is greater than the log canonical threshold of at least one metric associated to 
an irreducible linear system, so it is enough to consider only these.
\end{proof}

\subsection{General formula}

Let $X_{\Sigma}$ be a smooth compact toric manifold.
Let $N(T)$ be the normalizer of $T$ in $\mathrm{Aut}(X_{\Sigma})$, and denote by $W=N(T)/T$ the 
Weyl group obtained from $T$.

The group $N(T)$ naturally acts on $M$ and since $T$ acts trivially on $M$, this induces
an action of $W$ on $M$. By duality one also gets an action on $N$.

From the description of morphisms between toric varieties \cite[Theorem 3.3.4]{CLS11},
we can see that $W$ is 
isomorphic to the subgroup of $\mathrm{GL}(N)$ composed of the $\rho$ such that 
$\rho(\Sigma)=\Sigma$. In particular, $W$ is finite.

Given a subgroup $G$ of $W$, 
we denote by $T_G$ the preimage in $N(T)$ of $G$, and let $K_G:=K\cap T_G$.
If $P$ is a polytope in $M_{\mathbb{R}}$ we let $P^G$ be the set 
of $G$-invariant points of $P$. 

Finally, if $P$ is a polytope in $M_{\mathbb{R}}$, we denote by $P(\mathbb{Q})$
the set of rational points in $P$, i.e. points $p$ such that there exists $m\in \mathbb{N}^*$
with $mp\in M$.

\begin{thm}
\label{Galpha}
Let $L$ be a $T_G$-linearized line bundle on $X_{\Sigma}$. Then
$$\alpha_{K_G}(L)=\mathrm{inf}_{p\in P_L^G(\mathbb{Q})}\mathrm{sup}\{c>0|cP_L\subset \mathrm{Int}(cp+P_{-K_{X_{\Sigma}}})\}.$$
\end{thm}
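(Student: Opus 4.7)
The plan is to combine Corollary~\ref{metirr} with Theorem~\ref{calclct}. The first step is to classify the irreducible $K_G$-subrepresentations of $H^0(X_\Sigma, mL)$. The compact torus $T_c \subseteq K_G$ acts on $H^0(X_\Sigma, mL)$ with one-dimensional weight spaces $V_\chi$ indexed by $\chi \in mP_L \cap M$, and the quotient $K_G \to G$ permutes these weight spaces via the natural $G$-action on $M$. Hence every $K_G$-invariant subspace has the form $V_S := \bigoplus_{\chi \in S} V_\chi$ for a $G$-stable $S \subseteq mP_L \cap M$, and the irreducible $K_G$-subrepresentations are precisely the $V_\mathcal{O}$ attached to single $G$-orbits $\mathcal{O}$.

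Next I would compute the Newton body of the algebraic metric $h_{V_\mathcal{O}/m}$. Working in the canonical basis of $V_\mathcal{O}$ indexed by the characters in $\mathcal{O}$, its local potential on $T$ is $\frac{1}{2m}\ln \sum_{\chi \in \mathcal{O}} |z^\chi|^2$, so the associated convex function on $N_\mathbb{R}$ is $f(x) = \frac{1}{2m}\ln \sum_{\chi \in \mathcal{O}} e^{2\langle \chi, x\rangle}$. A short separation argument, based on the fact that $f(x) - \frac{1}{m}\max_{\chi \in \mathcal{O}}\langle \chi, x\rangle$ is bounded, shows that $m' \in N(f)$ if and only if $mm' \in \mathrm{Conv}(\mathcal{O})$; hence $P_{h_{V_\mathcal{O}/m}} = \frac{1}{m}\mathrm{Conv}(\mathcal{O}) \subseteq P_L$. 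Plugging this into Theorem~\ref{calclct} yields
$$\mathrm{lct}(h_{V_\mathcal{O}/m}) = \sup\bigl\{c > 0 : cP_L \subset \mathrm{Int}\bigl(\tfrac{c}{m}\mathrm{Conv}(\mathcal{O}) + P_{-K_{X_\Sigma}}\bigr)\bigr\}.$$

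Both inequalities of the theorem then fall out. For the lower bound on $\alpha_{K_G}(L)$, given any orbit $\mathcal{O}$ set $p_\mathcal{O} := \frac{1}{m|\mathcal{O}|}\sum_{\chi \in \mathcal{O}} \chi \in P_L^G(\mathbb{Q})$; since $p_\mathcal{O} \in \frac{1}{m}\mathrm{Conv}(\mathcal{O})$, the inclusion $cP_L \subset \mathrm{Int}(cp_\mathcal{O} + P_{-K_{X_\Sigma}})$ is stronger than the one defining $\mathrm{lct}(h_{V_\mathcal{O}/m})$, so $\mathrm{lct}(h_{V_\mathcal{O}/m})$ dominates the right-hand side of the theorem evaluated at $p_\mathcal{O}$. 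For the converse, any $p \in P_L^G(\mathbb{Q})$ admits $m \in \mathbb{N}^*$ with $mp \in M^G \cap mP_L$; the singleton $\{mp\}$ is then a $G$-orbit in $mP_L \cap M$, and the associated one-dimensional $K_G$-subrepresentation produces an algebraic metric whose Newton body is exactly $\{p\}$, so by the formula above its lct equals $\sup\{c > 0 : cP_L \subset \mathrm{Int}(cp + P_{-K_{X_\Sigma}})\}$. Combining with Corollary~\ref{metirr} closes the loop.

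The only genuinely delicate step is the representation-theoretic classification, which hinges on the one-dimensionality of the $T_c$-weight spaces in $H^0(X_\Sigma, mL)$: this reduces $K_G$-invariance to $G$-stability on weights and makes $V_\mathcal{O}$ irreducible precisely when $\mathcal{O}$ is a single orbit. After that, both inequalities reduce to the elementary inclusion $\{p_\mathcal{O}\} \subseteq \frac{1}{m}\mathrm{Conv}(\mathcal{O})$ and the observation that any $G$-invariant rational point is itself the barycenter of its own trivial $G$-orbit at an appropriate level $m$.
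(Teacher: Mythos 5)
Your proof is correct, and its overall skeleton is the same as the paper's: both arguments invoke Corollary~\ref{metirr} to reduce to irreducible $K_G$-subrepresentations, identify these (via the one-dimensionality of the $T_c$-weight spaces) with $G$-orbits $\mathcal{O}$ of lattice points of $mP_L$, pass from an orbit to its barycenter $p_{\mathcal{O}}\in P_L^G(\mathbb{Q})$, and recover every rational $G$-invariant point from a singleton orbit. Where you genuinely diverge is in the key comparison between the orbit metric and the barycenter point metric. The paper never computes the Newton body of the orbit metric: it bounds the potential $\frac{1}{2m}\ln\sum_{g\in G}|(g\cdot p)(z)|^2$ from below, via the arithmetic--geometric mean inequality, by the potential of the one-dimensional system generated by the section $\sum_{g\in G}(g\cdot p)$ in $H^0(X_{\Sigma},m|G|L)$, and then uses monotonicity of complex singularity exponents \cite[1.4]{DK01}; only one-dimensional invariant systems are ever fed into Theorem~\ref{calclct}. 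You instead compute the Newton body of the orbit metric exactly, $P_{h_{V_{\mathcal{O}}/m}}=\frac{1}{m}\mathrm{Conv}(\mathcal{O})$ (your boundedness argument comparing $f$ with $\frac{1}{m}\max_{\chi\in\mathcal{O}}\langle\chi,\cdot\rangle$ is sound), apply Theorem~\ref{calclct} directly to the orbit metric, and the comparison collapses to the trivial inclusion $cp_{\mathcal{O}}+P_{-K_{X_{\Sigma}}}\subset\frac{c}{m}\mathrm{Conv}(\mathcal{O})+P_{-K_{X_{\Sigma}}}$. Your route buys slightly more: an exact polytope formula for the lct of \emph{every} irreducible orbit metric, where the paper only obtains a one-sided bound for non-singleton orbits, and it keeps the whole argument inside the combinatorial framework of Newton bodies rather than re-invoking the analytic monotonicity of \cite{DK01} at this stage. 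The paper's AM--GM trick is marginally lighter, as it requires no Newton-body computation for sums of exponentials.
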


\begin{proof}
The Corollary~\ref{metirr} shows that it is enough to consider algebraic metrics on $L$
associated to $K_G$-irreducible linear system in a multiple of $L$.

The $T_c$-irreducible subrepresentations of $H^0(X_{\Sigma},mL)$ are the dimension one 
subspaces corresponding to integral points of the polytope $P_{mL}$ associated to $mL$.
Recall that $P_{mL}=mP_L$.

Now a $K_G$-irreducible subrepresentation of $H^0(X_{\Sigma},mL)$ is the union of 
the images by $G$ of a $T_c$-irreducible representation.

Let $p$ be an integral point in $mP_L$, and denote by $\Delta$ the $K_G$-irreducible
linear system generated by the $G$-orbit of $p$.

The potential of $h_{\Delta/m}$ is
$$\varphi_{\Delta/m}(z)=\frac{1}{2m}\mathrm{ln}\left(\sum_{g\in G}|(g\cdot p)(z)|^2\right).$$

By arithmetico-geometric inequality, 
$$\varphi_{\Delta/m}(z)\geq \frac{1}{2m}\mathrm{ln}\left|\left(\frac{\sum_{g\in G}(g\cdot p)}{|G|}\right)(z)\right|^2.$$

The right-hand side of this inequality is the potential of the algebraic metric
$h_{\frac{\sum_{g\in G}(g\cdot p)}{m|G|}}$ corresponding to the linear system 
of $H^0(X_{\Sigma},m|G|L)$ generated by the section $\sum_{g\in G}(g\cdot p)$.

Using again the fact that the complex singularity exponent is increasing \cite[1.4]{DK01}, 
we get 
$$\mathrm{lct}(h_{\Delta/m})\geq \mathrm{lct}(h_{\frac{\sum_{g\in G}(g\cdot p)}{m|G|}}).$$

We have thus shown that it is enough to compute the log canonical thresholds 
of algebraic metrics associated to one dimensional $G$-invariant sublinear 
systems of multiples of $L$.

We use Theorem~\ref{calclct} to conclude.
Indeed if $p\in mP_L$ generates a one dimensional $G$-invariant sublinear 
system in $H^0(X_{\Sigma},mL)$, and $f_{p/m}$ denotes the convex function associated 
to the potential of the corresponding algebraic metric $h_{p/m}$, we have $N(f_{p/m})=\{p/m\}$.
 
Applying Theorem~\ref{calclct} gives 
$$\mathrm{lct}(h_{p/m})=\sup\{c>0|cP_L\subset \mathrm{Int}(cp/m+P_{-K_{X_{\Sigma}}})\}.$$

Finally, observe that as $p$ and $m$ vary, they describe the set $P_L^G(\mathbb{Q})$
of $G$-invariant points of $P_L$ with rational coordinates.
\end{proof}

\begin{rem}
One can also prove, without the use of Corollary~\ref{metirr}, that we can consider only 
metrics corresponding to points of $P_L$ (not necessarily with rational coordinates), 
by considering the expression of the log canonical threshold of any metric.

Indeed, if $f$ is a convex function on $N_{\mathbb{R}}$, corresponding to a metric $h$ on $L$, and 
$p$ is a point in $N(f)$, then the metric $h_p$ associated to the convex function 
$x\mapsto \langle p,x \rangle$ is also a non-negatively curved metric on $L$, and $\mathrm{lct}(h_p)\leq \mathrm{lct}(h)$.
\end{rem}

\subsection{Case of the anticanonical line bundle}

We assume in this section that $L=-K_{X_{\Sigma}}$.

This line bundle admits a natural $\mathrm{Aut}(X)$-linearization, 
and the polytope associated to this 
linearization contains the origin in its interior, because $-K_X$ is big.

For any subgroup $G$ of $W$, let $S_G:=\{p\in \partial P_L|g\cdot p = p ~\forall g\in G\}$.
If $0\neq p\in P_L$, let $w_p$ be the point $\partial P_L \cap \{-tp|t\geq0\}$.

\begin{rem} \mbox{}
\begin{itemize}
\item $S_G$ is empty if and only if $\{0\}$ is the only point fixed by $G$ in $P$.
\item If $S_W$ is empty, $X_{\Sigma}$ is called symmetric. 
\end{itemize}
\end{rem}

\begin{prop}
\label{lctanti}
Assume that $P_h=\{p\}$ with $0\neq p\in P_L$. Then 
$$\mathrm{lct}(h)=\frac{|w_p|}{|w_p|+|p|}.$$
\end{prop}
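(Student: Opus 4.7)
My plan is to apply Theorem~\ref{calclct} directly with $L = -K_{X_\Sigma}$ and then unwind the resulting polytopal inequality. Since $P_{-K_{X_\Sigma}} = P_L$ and $P_h = \{p\}$, Theorem~\ref{calclct} yields
$$\mathrm{lct}(h) = \sup\{c > 0 : cP_L \subset \mathrm{Int}(cp + P_L)\},$$
which, translating by $-cp$, is equivalent to $c(P_L - p) \subset \mathrm{Int}(P_L)$.

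The next step is to test this inclusion facet by facet. Because $-K_{X_\Sigma} = \sum_\rho D_\rho$, the support function satisfies $g_L(u_\rho) = -1$ for every ray, so
$$P_L = \{m \in M_{\mathbb{R}} : \langle m, u_\rho \rangle \geq -1 \text{ for all } \rho \in \Sigma(1)\},$$
and in particular the inner normal $u_F$ of every facet $F$ of $P_L$ is one of the $u_\rho$, with support value $-1$. The sharpest constraint on $c$ coming from the facet $F$ is obtained by taking $q \in F$ (so $\langle q, u_F \rangle = -1$) in the inequality $c \langle q - p, u_F \rangle > -1$; this rearranges to $c(1 + \langle p, u_F \rangle) < 1$. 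Since $p \in P_L$ forces $1 + \langle p, u_F \rangle \geq 0$, taking the infimum over $F$ gives
$$\mathrm{lct}(h) = \frac{1}{1 + \max_F \langle p, u_F \rangle}.$$

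The final step is the geometric identification $\max_F \langle p, u_F \rangle = |p|/|w_p|$. Writing $w_p = -t_0\, p$, so that $t_0 = |w_p|/|p| > 0$, the condition $w_p \in \partial P_L$ reads $\langle -t_0 p, u_F \rangle \geq -1$ for every facet $F$ with equality for at least one, which is exactly $\max_F \langle p, u_F \rangle = 1/t_0 = |p|/|w_p|$. Substituting,
$$\mathrm{lct}(h) = \frac{1}{1 + |p|/|w_p|} = \frac{|w_p|}{|w_p| + |p|},$$
as claimed. The only slightly delicate point is verifying that every facet of $P_L$ has inner normal of the form $u_\rho$ with support value $-1$; this is immediate from the defining inequalities above once one notes that the non-facet $u_\rho$'s just give redundant constraints. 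The rest is elementary convex geometry.
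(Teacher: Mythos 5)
Your proposal is correct, but it evaluates the supremum from Theorem~\ref{calclct} by a different method than the paper. Both proofs start identically, reducing to $\mathrm{lct}(h)=\sup\{c>0: cP_L\subset \mathrm{Int}(cp+P_L)\}$; the paper then finishes with a short synthetic argument: it follows the half-line from $p$ through the origin, notes it exits $P_L$ at $w_p$ and exits $p+P_L$ at a point $r$, observes that the translation $0\mapsto p$ carries $w_p$ to $r$ (so $|r-p|=|w_p|$), and asserts that the lct is the ratio $|r-p|/|w_p-p|$. You instead use the dual presentation specific to the anticanonical bundle, $P_L=\{m: \langle m,u_\rho\rangle\geq -1\ \forall\rho\}$ coming from $-K_{X_\Sigma}=\sum_\rho D_\rho$, test the inclusion facet by facet to get the intermediate formula $\mathrm{lct}(h)=1/\bigl(1+\max_F\langle p,u_F\rangle\bigr)$, and then identify $\max_F\langle p,u_F\rangle=|p|/|w_p|$ via the defining property of $w_p\in\partial P_L$. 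What your route buys: it makes rigorous exactly the step the paper dismisses as ``easy to see'' (why the direction through the origin is the binding constraint), and the intermediate facet formula is precisely the shape that reappears in Theorem~\ref{previous}, so it connects the two results transparently. What the paper's route buys: it is shorter, coordinate-free, and never uses that all support values equal $-1$, so it visibly proves the same statement for any convex body with $0$ in its interior, whereas your argument as written leans on the anticanonical normalization (though it generalizes with trivial changes, replacing $-1$ by $-a_F$ throughout). Two small points you leave implicit but which are fine: $\mathrm{Int}(P_L)$ is cut out by the strict facet inequalities because $P_L$ is full-dimensional ($-K_{X_\Sigma}$ is big), and $\max_F\langle p,u_F\rangle>0$, which follows from compactness of $P_L$ (equivalently, from the existence of $w_p$), so the set of admissible $c$ is genuinely a bounded interval.
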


\begin{figure}
\begin{tikzpicture}[scale=1]
\draw[black, fill=black] (3,0) circle (0.07);
\draw[black, fill=black] (0,0) circle (0.07);
\draw[black, fill=black] (0,3) circle (0.07);
\draw[black, fill=black] (1,1) circle (0.07);
\draw[black, fill=black] (0.5,0.5) circle (0.07);
\draw[black, fill=black] (1.5,1.5) circle (0.07);

\draw (0.7,0.4) node {$p$};
\draw (0.9,1.2) node {0};
\draw (1.6,1.7) node {$w_p$};

\draw[] (0,0) -- (3,0);
\draw[] (0,0) -- (0,3);
\draw[] (3,0) -- (0,3);
\draw[] (0.5,0.5) -- (1.5,1.5);
\end{tikzpicture} 
\end{figure}

\begin{proof}
By Theorem~\ref{calclct} we have
$$\mathrm{lct}(h)=\sup\{c>0| cP\subset \mathrm{Int}(cp+P)\}.$$  

Consider the half-line starting from $p$ and containing the origin.
It intersects $\partial P$ at $w_p$.
Denote by $r$ its intersection with $\partial (p+P)$.

Then it is easy to see that the log canonical threshold of $h_p$ 
is equal to the quotient of the distance between $p$ and $r$ by the 
distance between $p$ and $w_p$.
The translation sending $0$ to $p$ also sends $w_p$ to $r$, so 
$|r-p|=|w_p|$.
The result follows.
\end{proof}

\begin{rem}
\label{h0}
If $P_h=\{0\}$ then $\mathrm{lct}(h)=1$.
\end{rem}

\begin{exa}
Consider the case $P_h=\{b\}$, where $b$ is the barycenter of the polytope $P_L$.
Then $\mathrm{lct}(h)$ is equal to the greatest lower bound for Ricci curvature $R(X)$,
introduced by Sz{\'e}kelyhidi \cite{Sze11}, and computed for toric manifolds by Li \cite{Li11}. 
\end{exa}

From this formula we recover the previous results of Song and Chel'tsov-Shramov. 

\begin{thm}
\cite{Son05} \cite[Lemma 6.1]{CSD08}
\label{previous}
Let $X$ be a smooth Fano toric manifold, and $G$ be a subgroup of $W$. Then 
\begin{itemize}
\item if $S_G$ is empty, $\alpha_{K_G}(X)=1$;
\item else, $\alpha_{K_G}(X)=\frac{1}{1+\mathrm{max}_{p\in S_G}\frac{|p|}{|w_p|}}\leq\frac{1}{2}$.
\end{itemize}
\end{thm}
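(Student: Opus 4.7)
The plan is to specialize Theorem~\ref{Galpha} to $L=-K_{X_\Sigma}$, rewrite each inner supremum via Proposition~\ref{lctanti} and Remark~\ref{h0}, and then reduce the infimum over rational $G$-fixed points of $P_L$ to the maximum over $S_G$ by a density argument.

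First I would apply Theorem~\ref{Galpha} directly: when $L=-K_{X_\Sigma}$, its formula reads $\alpha_{K_G}(-K_{X_\Sigma})=\inf_{p\in P_L^G(\mathbb{Q})}\mathrm{lct}(h_p)$, where $\mathrm{lct}(h_0)=1$ by Remark~\ref{h0} and $\mathrm{lct}(h_p)=1/(1+|p|/|w_p|)$ for nonzero $p$ by Proposition~\ref{lctanti}. If $S_G$ is empty, then by the remark preceding Proposition~\ref{lctanti} the only $G$-fixed point of $P_L$ is $0$, so $P_L^G(\mathbb{Q})=\{0\}$ and the infimum equals $1$, giving the first bullet.

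Now suppose $S_G\neq\emptyset$. Since $\mathrm{lct}(h_p)<1$ whenever $p\neq 0$ (because $0\in\mathrm{Int}(P_L)$ forces $|p|/|w_p|>0$), the infimum ignores the $p=0$ term, yielding
$$\alpha_{K_G}(-K_{X_\Sigma})=\frac{1}{1+\displaystyle\sup_{p\in P_L^G(\mathbb{Q})\setminus\{0\}}|p|/|w_p|}.$$
The core of the argument is to identify this supremum with $\max_{p\in S_G}|p|/|w_p|$. For the $\leq$ direction, given a nonzero $p\in P_L^G(\mathbb{Q})$, I scale along the ray from $0$ through $p$ until hitting $\partial P_L$ at $q=\lambda p$ with $\lambda\geq 1$; $G$-invariance of the whole ray shows $q\in S_G$, and since the opposite half-lines $\{-tp:t\geq 0\}$ and $\{-tq:t\geq 0\}$ coincide we have $w_p=w_q$, whence $|p|/|w_p|\leq|q|/|w_q|$. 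For the $\geq$ direction, since $W$ acts integrally on $M$, $P_L^G$ is a rational polytope and its rational points are dense; the function $p\mapsto|p|/|w_p|$ is continuous on $P_L\setminus\{0\}$ because $|w_p|$ depends continuously on the direction $p/|p|$, so any $q\in S_G$ is a limit of rational $p_n\in P_L^G\setminus\{0\}$ with $|p_n|/|w_{p_n}|\to|q|/|w_q|$.

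The bound $\alpha_{K_G}\leq 1/2$ then comes for free from a symmetry of $S_G$: for $q\in S_G$, the opposite boundary point $w_q$ lies on a $G$-stable ray and on $\partial P_L$, so $w_q\in S_G$, and clearly $w_{w_q}=q$; hence $|q|/|w_q|$ and $|w_q|/|q|$ are both values of the ratio on $S_G$ and are reciprocals, forcing their maximum to be $\geq 1$. The main obstacle I foresee is the density/continuity step of the third paragraph; once one observes that $P_L^G$ inherits rationality from the integrality of the $W$-action and that $|w_{\,\cdot\,}|$ is determined by the direction alone, this reduces to a routine convexity argument.
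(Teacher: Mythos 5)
Your proposal is correct and follows essentially the same route as the paper's proof: specialize Theorem~\ref{Galpha} to $-K_{X_\Sigma}$, evaluate the inner suprema via Remark~\ref{h0} and Proposition~\ref{lctanti}, reduce to points of $S_G$ by radial projection to the boundary, and get the bound $\leq \frac{1}{2}$ from the observation that $w_q\in S_G$ with $w_{w_q}=q$, forcing the maximum of the ratio to be at least $1$. The only difference is that you spell out the density/continuity step (rationality of $P_L^G$ and continuity of $p\mapsto |p|/|w_p|$) needed to pass from rational $G$-fixed points to the possibly irrational points of $S_G$, a point the paper leaves implicit.
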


\begin{proof}
By Theorem~\ref{Galpha}, it is enough to consider only the (rational)
$G$-invariant points of $P$. 

The first case follows immediately using Remark~\ref{h0}.

In the second case, we obtain the formula using Proposition~\ref{lctanti}.
Indeed, it is enough to consider points $p$ in $S_G$ because 
if $q\neq 0$ is not in $\partial P$, and $p$ 
is the intersection of $\partial P$ with the half line
starting from the origin and going through $q$, then 
$\mathrm{lct}(h_q)\geq \mathrm{lct}(h_p)$.

Furthermore, $\max_{p\in S_G}\frac{|p|}{|w_p|}\geq 1$ 
because otherwise if $p$ was such a point at which this maximum was attained 
and it was $<1$ then we would have $\frac{|w_p|}{|p|}>1$ with $w_p\in S_G$, 
which is a contradiction.
\end{proof}

\subsection{Example}

We compute the $\alpha$-invariant of any linearized line bundle on the blow up $X$ of 
$\mathbb{P}^2$ at one point which we denote $X$ in the following.

Identify $N$ with $\mathbb{Z}^2$.
The fan of $X$ has four rays, with generators 
$u_1=(1,0)$, $u_2=(1,1)$, $u_3=(0,1)$ and $u_4=(-1,-1)$.

The group $W$ is isomorphic to $\mathbb{Z}/2\mathbb{Z}$ and acts on $M_{\mathbb{R}}$ 
by exchanging the coordinates $(x,y)\mapsto (y,x)$.

We define the polytope $P(k,l)$ to be the polytope whose vertices are 
$(0,k)$, $(0,l)$, $(k,0)$ and $(l,0)$, for $k,l \in \mathbb{N}$ with $l>k$.
It is easy to see that the polytopes of nef and big divisors are the $P(k,l)$, up to 
translation by a character.
For example, the polytope of the anticanonical bundle is $Q:=(-1,-1)+P(1,3)$.

\begin{prop}
The $\alpha$-invariant with respect to $K_W$
of the nef and big line bundle corresponding to $P(k,l)$ is equal to 
$\mathrm{inf}(\frac{1}{l-k},\frac{2}{l})$. 
\end{prop}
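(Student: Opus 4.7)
The plan is to apply Theorem~\ref{Galpha} with $G=W$ to the polytope $P_L=P(k,l)$ and then carry out the resulting finite-dimensional minimization explicitly.

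First I would identify the $W$-invariant points of $P(k,l)$. The Weyl group $W\simeq \mathbb{Z}/2\mathbb{Z}$ acts on $M_{\mathbb{R}}$ by swapping coordinates, so its fixed locus is the diagonal $\{x=y\}$. Writing $P(k,l)=\{(x,y)\in \mathbb{R}^2_{\geq 0}: k\leq x+y\leq l\}$, one sees that $P(k,l)^W$ is the segment $\{(t,t):t\in[k/2,l/2]\}$. Since $k,l\in \mathbb{N}$, the endpoints $t=k/2$ and $t=l/2$ are rational, and once I verify below that the infimum is attained at an endpoint no density argument will be needed.

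Next I would unpack the inclusion $cP(k,l)\subset \mathrm{Int}(cp+Q)$, where $Q:=P_{-K_X}=(-1,-1)+P(1,3)$ has vertices $(-1,0),(-1,2),(0,-1),(2,-1)$ and hence the facet description
$$Q=\{(x,y): x\geq -1,\ y\geq -1,\ x+y\geq -1,\ x+y\leq 1\}.$$
For $p=(t,t)$, the four facets of $cp+Q$ impose respectively the lower bounds $ct-1$ on $x$ and $y$, the lower bound $2ct-1$ on $x+y$, and the upper bound $2ct+1$ on $x+y$. Comparing these to the extremes $\min x=\min y=0$, $\min(x+y)=ck$, $\max(x+y)=cl$ on $cP(k,l)$, the inclusion reduces to the three strict inequalities
$$ct<1,\qquad c(2t-k)<1,\qquad c(l-2t)<1.$$
As each of $t$, $2t-k$, $l-2t$ is nonnegative on $[k/2,l/2]$, this yields
$$c(t):=\sup\{c>0: cP(k,l)\subset \mathrm{Int}(cp+Q)\}=\frac{1}{\max(t,\,2t-k,\,l-2t)}.$$

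Finally I would minimize $c(t)$. The function $t\mapsto \max(t,2t-k,l-2t)$ is a maximum of affine functions, hence convex, so on $[k/2,l/2]$ it attains its maximum at an endpoint. At $t=l/2$ its value is $\max(l/2,l-k,0)=\max(l/2,l-k)$, while at $t=k/2$ it is $\max(k/2,0,l-k)=\max(k/2,l-k)$; the former dominates because $l>k$. Therefore
$$\alpha_{K_W}(L)=\frac{1}{\max(l/2,\,l-k)}=\min\!\left(\frac{1}{l-k},\ \frac{2}{l}\right),$$
as claimed. No step here is genuinely difficult; the one requiring a little care is the facet-by-facet translation of the containment into the three inequalities above, which is a routine verification using convexity of the two polytopes involved.
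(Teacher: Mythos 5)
Your proof is correct and follows essentially the same route as the paper: apply Theorem~\ref{Galpha}, restrict to the diagonal segment $\{(t,t): t\in[k/2,l/2]\}$ of $W$-fixed points, and translate the containment $cP(k,l)\subset \mathrm{Int}(c(t,t)+Q)$ into the facet inequalities of $Q$. The only notable difference is organizational: you compute $c(t)=1/\max(t,\,2t-k,\,l-2t)$ uniformly and minimize by convexity, which simultaneously supplies a rigorous justification of the reduction to the endpoint $(l/2,l/2)$ (a step the paper only asserts as an easy remark) and replaces the paper's two-case analysis ($k\geq l/2$ versus $k\leq l/2$) by a single formula.
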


\begin{proof}
By Theorem~\ref{Galpha}, it is enough to consider points (with rational coordinates)
in the intersection of $P(k,l)$ with the first diagonal. However, one easily remarks 
that it is enough to consider only the point $(l/2,l/2)$, similarly to the proof of Theorem~\ref{previous}.

We want to compute 
$$\mathrm{sup}\{c>0|cP(k,l)\subset \mathrm{Int}(c(l/2,l/2)+Q)\}.$$
This is of course equal to 
$$\mathrm{sup}\{c>0|P(k,l)\subset \mathrm{Int}((l/2,l/2)+\frac{1}{c}Q)\}.$$

Observe that $l/2$ is the least positive constant $b$ such that 
$$\{(0,l), (l,0)\} \subset (l/2,l/2)+bQ.$$
If $k\geq l/2$, then we have also $\{(0,k),(k,0)\}\subset (l/2,l/2)+l/2Q$, 
so 
$$P(k,l)\subset (l/2,l/2)+l/2Q.$$
Thus $\alpha_{K_W}(P(k,l))=2/l$ when $k\geq l/2$.

For the other case, observe that $l-k$ is the least positive constant $b$
such that 
$(k/2,k/2)\in (l/2,l/2)+bQ$.
If $k\leq l/2$, then we have also 
$$P(k,l)\subset (l/2,l/2)+(l-k)Q.$$
Thus $\alpha_{K_W}(P(k,l))=\frac{1}{l-k}$ when $k\geq l/2$.
\end{proof}

\bibliographystyle{alpha}
\bibliography{biblio}

\end{document}